\numberwithin{equation}{section}
\theoremstyle{plain} 
\newtheorem{theorem}{Theorem}[section]
\newtheorem{lemma}[theorem]{Lemma}
\newtheorem{prop}[theorem]{Proposition}
\theoremstyle{definition}
\newtheorem{defn}[theorem]{Definition}
\theoremstyle{remark}
\newtheorem{rem}[theorem]{Remark}
\begin{document}

\title[Pluricomplex Green function of form type Monge-Amp\`{e}re equations]{The pluricomplex Green function of the Monge-Amp\`{e}re equation for $(n-1)$-plurisubharmonic functions and form type $k$-Hessian equations}

\author{Shuimu Li} \email{lsm000524@sjtu.edu.cn}
\thanks{The author was partially supported by Shanghai Jiao Tong University Scientific and Technological Innovation Funds, NSFC-12031012, NSFC-11831003 and the Institute of Modern Analysis-A Frontier Research Center of Shanghai.}


 \curraddr{School of Mathematics, Shanghai Jiaotong University, Shanghai}




\maketitle

\begin{abstract}
In this paper, we introduce the pluricomplex Green function of the Monge-Amp\`{e}re equation for $(n-1)$-plurisubharmonic functions by solving the Dirichlet problem for the form type Monge-Amp\`{e}re and Hessian equations on a punctured domain. We prove the pluricomplex Green function is $C^{1,\alpha}$ by constructing
approximate solutions and establishing uniform a priori estimates for the gradient and the complex Hessian. The singular solutions turn out to be smooth for the $k$-Hessian equations for $(n-1)$-$k$-admissible functions.
\end{abstract}

\keywords{\noindent\footnotesize Keywords: Pluricomplex Green function, Form type fully nonlinear ellptic equations, a priori estimates\\
MSC: 35J08, 35J60, 35J96}






\section{Introduction}

The Green functions play fundamental roles in the analysis of partial differential equations. On a bounded smooth domain $\Omega\subset\mathbb{R}^n$, the Green function $G(x,x_0)$ of the Laplace operator $-\Delta$ is the unique smooth solution to the homogeneous Dirichlet problem in the punctured domain
\begin{equation}
    \begin{cases}
        -\Delta G=0\quad{\rm in\;}\overline{\Omega}\setminus\{x_0\},\\
        G=0\quad{\rm on\;}\partial\Omega,\\
        G(x,x_0)\sim\Phi(x-x_0)\quad{\rm as\;}x\rightarrow x_0,
    \end{cases}
\end{equation}
for each fixed $x_0\in\Omega$. Here $\Phi$ is the fundamental solution of the Laplace equation defined by
\begin{equation}
    \Phi(x)=\begin{cases}
        \frac{1}{(n-2)|\mathbb{S}^{n-1}|}|x|^{2-n},\quad n\geq3,\\
        -\frac{1}{4\pi}\log|x|,\quad n=2,
    \end{cases}
\end{equation}
so that $-\Delta\Phi=\delta_0$ in the sense of distributions.

For the complex Monge-Amp\`{e}re operator
\begin{equation}\label{cma000}
    u\mapsto\det(u_{i\bar{j}}),
\end{equation}
the fundamental solution is $\Phi(z)=-\log|z|$, which is the radially symmetric plurisubharmonic function satisfying 
\begin{equation}\label{cma001}
    \det(\Phi_{i\bar{j}})=c\delta_0
\end{equation}
in the sense of pluripotential theory. We remark that unlike real Monge-Amp\`{e}re equations, plurisubharmonic solutions to \eqref{cma001} are not unique by the examples in Bedford-Taylor \cite{bedford1976dirichlet}. The pluricomplex Green function of operator \eqref{cma000} on $\Omega\subset\mathbb{C}^n$ solves the Dirichlet problem
\begin{equation}\label{pgfofcma}
    \begin{cases}
        \det(u_{i\bar{j}})=0 \quad{\rm in\;}\overline{\Omega}\setminus\{z_0\},\\
        u=0\quad{\rm on\;}\partial\Omega,\\
        u(z)=\log|z-z_0|+O(1)\quad{\rm as\;}z\rightarrow z_0.
    \end{cases}
\end{equation}
Klimek \cite{klimek1985extremal} proved the existence of the extremal function
\begin{equation}
    g_\Omega(z,z_0)=\sup\{v\in{\rm PSH}(\Omega):v<0,v(z)\leq\log|z-z_0|+O(1)\},\quad z,z_0\in\Omega.
\end{equation}
The regularity of the pluricomplex Green function on $\Omega\setminus\{z_0\}$ is closely related to the geometric conditions of $\partial\Omega$. Demailly \cite{demailly1985mesures} showed that $u(z)=g_\Omega(z,z_0)$ is unique and continuous as the solution to \eqref{pgfofcma} on hyperconvex domains. When $\Omega$ is strictly convex, Lempert \cite{lempert1981metrique,lempert1983solving} proved $u\in C^\infty(\overline{\Omega}\setminus\{z_0\})$. For $\Omega$ strongly pseudoconvex, Guan \cite{guan2000correction} and Blocki \cite{blocki2000c1} showed the $C^{1,1}$ regularity of $u$, which is also optimal by counterexamples constructed in Bedford-Demailly \cite{bedford1988two}.

The pluricomplex Green function for the complex Monge-Amp\`{e}re equation is also a natural generalization of the Green function in one-dimensional complex analysis. From this point of view, it exhibits many useful properties of pluriharmonic functions and  holomorphic maps. Lempert \cite{lempert1981metrique,lempert1983solving} used the pluricomplex Green function to study the Kobayashi distance. Bracci-Contreras-D\'{i}az-Madrigal \cite{bracci2009pluripotential} studied the semigroup of holomorphic automorphisms and proved a Schwarz type lemma for the pluricomplex Green function. The pluricomplex Poisson kernel was introduced in Bracci-Patrizio \cite{bracci2005monge}. Later, Bracci-Patrizio-Trapani \cite{bracci2009pluricomplex} showed the relationship between the pluricomplex Green function and the pluricomplex Poisson kernel and generalized the Phragmen-Lindel\"{o}f theorem, in which they also gave an explicit representation formula for all plurisubharmonic functions, combining the work of Demailly \cite{demailly1985mesures}.  

Recently, Gao-Ma-Zhang \cite{gao2023dirichlet} studied the pluricomplex Green function of the $k$-Hessian operator
\begin{equation}\label{ckh000}
    u\mapsto S_k(i\partial\bar{\partial} u)=\sum_{1\leq i_1<\cdots<i_k\leq n}\lambda_{i_1}\cdots\lambda_{i_k},
\end{equation}
where $1\leq k\leq n-1$ and $\lambda_i$ are the eigenvalues of $i\partial\bar{\partial}u$. In this case, the fundamental solution
$\Phi(z)=-|z|^{2-\frac{2n}{k}}$ is the radial $k$-admissible function satisfying $S_k(i\partial\bar{\partial}\Phi)=0$ for $z\neq0$. The Pluricomplex Green function of operator \eqref{ckh000} is the solution to
\begin{equation}\label{pgfofckh}
    \begin{cases}
        S_k(i\partial\bar{\partial}u)=0 \quad{\rm in\;}\overline{\Omega}\setminus\{z_0\},\\
        u=0\quad{\rm on\;}\partial\Omega,\\
        u(z)=-|z|^{2-\frac{2n}{k}}+O(1)\quad{\rm as\;}z\rightarrow z_0.
    \end{cases}
\end{equation}
Gao-Ma-Zhang \cite{gao2023dirichlet} proved the $C^{1,\alpha}$ regularity of $u$ on strongly pseudoconvex domains. The Dirichlet problem for homogeneous real $k$-Hessian equations on a punctured domain was solved by Gao-Ma-Zhang \cite{gao2023dirichletreal}, with which they proved a weighted geometric inequality for $(k-1)$-convex starshaped closed hypersurface in $\mathbb{R}^n$ for $\frac{n}{2}\leq k<n$. Using a B\^{o}cher type theorem by Labutin \cite{labutin2002potential}, the behavior of the solution near the singularity is uniquely determined in the real case. 

In this paper, we are interested in finding the pluricomplex Green function of the Monge-Amp\`{e}re equation for $(n-1)$-plurisubharmonic functions
\begin{equation}\label{n-1psheq}
    \left(\omega_0+\frac{1}{n-1}((\Delta_\omega u)\omega-i\partial\bar{\partial}u)\right)^n=e^h\omega^n\quad{\rm in\;}M,
\end{equation}
where $\omega_0,\omega$ are Hermitian metrics. Admissible solutions to \eqref{n-1psheq} are required to be $(n-1)$-plurisubharmonic, namely the sums of any $n-1$ eigenvalues of the complex Hessian of the solution are positive, in the sense of Harvey-Lawson \cite{harvey2012geometric,harvey2013p}. Equation \eqref{n-1psheq} was first investigated in Fu-Wang-Wu \cite{fu2010form,fu2015form}, where they assumed $(M,\omega)$ to be K\"{a}hler with non-negative orthogonal bisectional curvature. 
Since then, the so-called `form type' fully nonlinear elliptic equations have been extensively studied in both analysis and geometry. Tosatti-Weinkove \cite{tosatti2017monge} solved \eqref{n-1psheq} on general K\"{a}hler and Hermitian manifolds, which contributed to the resolution of the Gauduchon conjecture in Sz\'{e}kelyhidi-Tosatti-Weinkove \cite{szekelyhidi2017gauduchon}. Form type $k$-Hessian equations were considered in Sz\'{e}kelyhidi \cite{szekelyhidi2018fully} for $k$-positive real (1,1)-forms $\omega_0$. George-Guan-Qiu \cite{george2022fully} generalized these equations to $p$-plurisubharmonic functions, namely nonlinear equations of the sums of any $p$ eigenvalues.

Suppose $\Omega\subset\mathbb{C}^n$ is a bounded smooth domain and $u\in C^2(\Omega;\mathbb{R})$. Denote by $\mu[u]=(\mu_1,\cdots,\mu_n)\in\mathbb{R}^n$ the eigenvalues of $(\Delta u)\sum dz^i\wedge d\bar{z}^i-i\partial\bar{\partial}u$, and consider the equation 
\begin{equation}\label{equation0000intro}
    F(u_{i\bar{j}})=S_k(\mu[u])=h\geq0
\end{equation}
for any $k=1,\cdots,n$. We call \eqref{equation0000intro} the form type $k$-Hessian equation. Note that when $k=n$, \eqref{equation0000intro} is just \eqref{n-1psheq} for $M$ complex Euclidean. 

First of all we need to find the fundamental solution of equation \eqref{equation0000intro}, which is a radially symmetric admissible function satisfying $F(\Phi)=0$ in $\mathbb{C}^n\setminus\{0\}$. Similar to the Laplacian and complex $k$-Hessian case, we have $\Phi(z)=-|z|^{-\gamma}$ for some $\gamma>0$. An interesting phenomenon is that for $k$ not very large or small (in this case, for $1<k<n$), the fundamental solution is not unique. In fact, $\gamma$ may take two distinct positive values, one of which is independent of $k$. This particular choice of $\gamma$ occurs whenever $k$ is large ($k>1$), as it makes $\mu_i=0$ for $n-1$ out of $n$ $\mu_i$'s. Such behaviors also appear for more complicated form type equations, namely equations of the sum of any $p$ eigenvalues. 

Motivated by previous works, we solve the homogeneous Dirichlet problem for equation \eqref{equation0000intro} on punctured domains of $\mathbb{C}^n$. We prove the following theorems:

\begin{theorem}\label{maintheorem}
    Let $1\leq k\leq n-1$ and $\Omega\subset{\mathbb{C}^n}$ be a bounded smooth domain. Then for any $z_0\in\Omega$ and $\varphi\in C^\infty(\partial\Omega)$, there exists an admissible function $u\in C^\infty(\overline{\Omega}\setminus\{z_0\})$ such that
    \begin{equation}\label{maintheorem1}
    \begin{cases}
        S_k(\mu[u])=0 \quad{\rm in\;}\overline{\Omega}\setminus\{z_0\},\\
        u=\varphi\quad{\rm on\;}\partial\Omega,\\
        u(z)=-|z-z_0|^{-\gamma_k}+O(1)\quad{\rm as\;}z\rightarrow z_0,
    \end{cases}
\end{equation}
where $\gamma_k=\frac{2n^2-4n+2k}{n-k}>0$.
\end{theorem}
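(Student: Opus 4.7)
The natural strategy is to construct $u$ as a limit of smooth admissible solutions on the annular domains $\Omega_\epsilon := \Omega \setminus \overline{B_\epsilon(z_0)}$, using the fundamental solution $\Phi(z) = -|z|^{-\gamma_k}$ (which satisfies $S_k(\mu[\Phi]) = 0$ on $\mathbb{C}^n \setminus \{0\}$ by the choice of $\gamma_k$) to impose the correct singular behavior. First I would solve, for each small $\epsilon>0$, the Dirichlet problem
\begin{equation*}
S_k(\mu[u_\epsilon]) = 0 \text{ in } \Omega_\epsilon, \quad u_\epsilon = \varphi \text{ on } \partial\Omega, \quad u_\epsilon = -\epsilon^{-\gamma_k} \text{ on } \partial B_\epsilon(z_0),
\end{equation*}
obtaining a smooth admissible solution via the existing Dirichlet theory for form type $k$-Hessian equations (Tosatti--Weinkove, Sz\'{e}kelyhidi, George--Guan--Qiu). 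The inner sphere $\partial B_\epsilon(z_0)$ is admissible in the required sense because it bounds a ball from outside, so the standard boundary admissibility hypothesis is satisfied.

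Next, using $\Phi$ shifted to $z_0$ together with a suitable strictly admissible auxiliary function, I would construct a subsolution $\Phi(\cdot - z_0) + C_1$ and a supersolution $\bar u$ on $\Omega_\epsilon$ whose boundary values sandwich those of $u_\epsilon$. The comparison principle for the form type operator then yields
\begin{equation*}
\Phi(z-z_0) + C_1 \leq u_\epsilon(z) \leq \bar u(z), \qquad z \in \Omega_\epsilon,
\end{equation*}
with constants independent of $\epsilon$. This two-sided sandwich gives the uniform $C^0$ bound on compact subsets of $\overline{\Omega}\setminus\{z_0\}$ and, after passage to the limit, the required asymptotics $u(z) = -|z-z_0|^{-\gamma_k} + O(1)$ as $z \to z_0$.

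The core technical step is to prove uniform (in $\epsilon$) estimates on $|\nabla u_\epsilon|$ and on $|i\partial\bar\partial u_\epsilon|$ scaling like $|z-z_0|^{-\gamma_k-1}$ and $|z-z_0|^{-\gamma_k-2}$, respectively. I would apply a maximum principle argument to a weighted test quantity of the form $\eta(|z-z_0|)\bigl(|\nabla u_\epsilon|^2 + A u_\epsilon\bigr)$ and its second-order analogue, with the cut-off $\eta$ tuned to the homogeneity of $\Phi$, adapting the interior arguments of Tosatti--Weinkove and Sz\'{e}kelyhidi for the $(n-1)$-type operator. Boundary contributions on $\partial\Omega$ are controlled by the barriers, while contributions on $\partial B_\epsilon(z_0)$ are handled by the rescaling $z \mapsto \epsilon(z-z_0)$, which converts the inner boundary piece into a fixed-scale estimate on the unit sphere. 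Once these weighted $C^1$ and $C^2$ bounds are in hand, Evans--Krylov together with Schauder bootstrapping gives uniform $C^{\ell,\alpha}_{\rm loc}$ control for every $\ell$ on compact subsets of $\overline{\Omega}\setminus\{z_0\}$, and a diagonal argument extracts a subsequence $u_{\epsilon_j}$ converging in $C^\infty_{\rm loc}$ to an admissible $u$ satisfying \eqref{maintheorem1}.

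The main obstacle will be the uniform second-order estimate. The form type operator couples $\Delta u$ to the full complex Hessian $u_{i\bar j}$, so standard pointwise bounds for $i\partial\bar\partial u$ coming from the complex $k$-Hessian theory (Hou--Ma--Wu, Sz\'{e}kelyhidi) do not transfer directly; the test function and the cut-off must be crafted to exploit the precise $\gamma_k$-homogeneity of $\Phi$ and to separate the diagonal and off-diagonal contributions to the eigenvalues $\mu[u]$, while retaining compatibility with the barriers used at the two boundary components.
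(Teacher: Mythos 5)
Your overall scheme (approximate on annular domains $\Omega_\varepsilon$, prove uniform weighted $C^1$/$C^2$ estimates, pass to the limit, and bootstrap via Evans--Krylov) matches the paper's. But there is a genuine gap at the very first step. You propose to solve the \emph{homogeneous} problem $S_k(\mu[u_\varepsilon])=0$ directly on $\Omega_\varepsilon$ and obtain smooth admissible solutions "via the existing Dirichlet theory for form type $k$-Hessian equations." The references you invoke (Tosatti--Weinkove, Sz\'ekelyhidi, George--Guan--Qiu) all establish existence and regularity for \emph{non-degenerate} right-hand sides $h>0$; none of them yields smooth solutions when $h\equiv 0$, since the equation then sits on the boundary of the admissible cone and the standard a priori estimates (and the continuity method) break. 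The paper avoids this by regularizing the right-hand side: one solves $S_k(\mu[u^\varepsilon])=\varepsilon>0$ on $\Omega_\varepsilon$, so each approximating problem is genuinely non-degenerate and classical theory applies, and the homogeneous equation is only recovered in the limit $\varepsilon\to 0$. Without this regularization your first step is unjustified, and the rest of the argument cannot get off the ground.

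Two secondary points are also worth noting. First, the inner boundary data: the paper takes $u^\varepsilon=\underline{u}=v+\Phi$ on all of $\partial\Omega_\varepsilon$, where $v$ solves a carefully chosen non-degenerate auxiliary problem; this makes $\underline{u}$ a strict subsolution agreeing with $u^\varepsilon$ on the entire boundary, which is exactly what is needed for the barrier construction and the boundary gradient/Hessian estimates. Prescribing the constant $-\varepsilon^{-\gamma_k}$ on $\partial B_\varepsilon(z_0)$ loses this precise matching and would require re-deriving those barriers with a bounded discrepancy $v|_{\partial B_\varepsilon}$ that is not controlled in the bootstrap. Second, the additive test quantity $\eta(|z-z_0|)\bigl(|\nabla u_\varepsilon|^2+A u_\varepsilon\bigr)$ is unlikely to give the sharp rates $|z-z_0|^{-\gamma_k-1}$ and $|z-z_0|^{-\gamma_k-2}$: the paper's estimates rely on \emph{multiplicative} weights, namely $P=(-u)^{-a}(|z|^2-\varepsilon^2)^b|\nabla u|^2$ and the analogous second-order quantity $Q=e^{\varphi(P)+\phi(u)+\psi(|z|^2-\varepsilon^2)}u_{\xi\bar\xi}$, and the cancellations that make the maximum-principle computation close require the logarithmic structure $\phi(u)=-a\log(-u)$, $\psi=b\log(|z|^2-\varepsilon^2)$ with exponents tuned to $\gamma_k$, together with the uniform ellipticity \eqref{uniformlyelliptic} valid precisely in the range $1\le k\le n-1$.
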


\begin{theorem}\label{maintheorem'}
    Let $2\leq k\leq n$ and $\Omega\subset{\mathbb{C}^n}$ be a bounded, smooth, 1-pseudoconvex domain. Then for any $z_0\in\Omega$, there exists an admissible function $u\in C^{1,\alpha}(\overline{\Omega}\setminus\{z_0\})$ for all $0<\alpha<1$ such that
    \begin{equation}\label{maintheorem2}
    \begin{cases}
         S_k(\mu[u])=0 \quad{\rm in\;}\overline{\Omega}\setminus\{z_0\},\\
        u=0\quad{\rm on\;}\partial\Omega,\\
        u(z)=-|z-z_0|^{4-2n}+O(1)\quad{\rm as\;}z\rightarrow z_0.
    \end{cases}
\end{equation}
Furthermore, if $2\leq k\leq n-1$, we have $u\in C^\infty(\overline{\Omega}\setminus\{z_0\})$ whenever $\Omega$ is bounded and smooth and $u|_{\partial\Omega}=\varphi\in C^\infty(\partial\Omega)$.
\end{theorem}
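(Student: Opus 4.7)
The plan is to solve a sequence of non-degenerate Dirichlet problems on the punctured domains $\Omega_\epsilon := \Omega\setminus\overline{B_\epsilon(z_0)}$, extract a uniform limit using a priori estimates away from $z_0$, and identify its asymptotic behaviour with the fundamental solution $\Phi_{z_0}(z) := -|z-z_0|^{4-2n}$. As observed in the discussion of fundamental solutions, $\Phi_{z_0}$ is radially symmetric and admissible with $\mu[\Phi_{z_0}]$ having only one non-zero component; consequently $S_k(\mu[\Phi_{z_0}]) = 0$ on $\mathbb{C}^n\setminus\{z_0\}$ simultaneously for every $k\ge 2$, which is precisely why the singular exponent $2n-4$ is $k$-independent.

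For fixed small $\epsilon>0$ the next step is to produce a smooth admissible solution $u_\epsilon$ to
\begin{equation*}
S_k(\mu[u_\epsilon]) = 0 \text{ in } \Omega_\epsilon,\qquad u_\epsilon = 0 \text{ on } \partial\Omega,\qquad u_\epsilon = -\epsilon^{4-2n} \text{ on } \partial B_\epsilon(z_0).
\end{equation*}
The inner boundary is strictly convex, hence 1-pseudoconvex, and the outer boundary is 1-pseudoconvex by hypothesis; a suitably translated $\Phi_{z_0}$ furnishes a global admissible subsolution matching both boundary conditions, so standard existence theory for form type $k$-Hessian Dirichlet problems on 1-pseudoconvex domains yields a smooth admissible $u_\epsilon$. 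The crux is to establish $\epsilon$-independent estimates on any compact $K\subset\overline\Omega\setminus\{z_0\}$. The comparison principle against $\Phi_{z_0}$ gives a sandwich $\Phi_{z_0}+C_1 \le u_\epsilon \le 0$, which yields the $C^0$ bound and, in the limit, the prescribed singular behaviour at $z_0$. For the gradient I would test with an auxiliary function of the form $\log|Du|^2 + \phi(u) + A\chi$, where $\chi$ is a smooth cut-off equal to $-|z-z_0|^{-\gamma_k}$ in a neighbourhood of $z_0$ introduced to absorb bad terms arising from the degeneracy. For the complex Hessian I would work with $\log\lambda_{\max}(u_{i\bar j}) + \phi(|Du|^2) + \psi(u)$ in the spirit of Sz\'ekelyhidi and Tosatti-Weinkove, again with cut-offs handling the singular contribution near $z_0$.

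A diagonal extraction as $\epsilon\to 0$ then gives $u\in C^{1,\alpha}_{\mathrm{loc}}(\overline\Omega\setminus\{z_0\})$ for every $\alpha<1$, with $S_k(\mu[u]) = 0$ on $\overline\Omega\setminus\{z_0\}$, $u = 0$ on $\partial\Omega$, and the required singular behaviour inherited from the $C^0$ estimate. For $2\le k\le n-1$, the operator $S_k$ acts as a uniformly elliptic operator on admissible matrices of controlled $C^{1,1}$ norm, and the equation is strictly non-degenerate on compact subsets avoiding $z_0$; Evans-Krylov and Schauder bootstrap then upgrade $u$ to $C^\infty(\overline\Omega\setminus\{z_0\})$, with the whole construction insensitive to replacing the boundary data by an arbitrary $\varphi\in C^\infty(\partial\Omega)$.

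The main obstacle is the interior second-order estimate in the critical case $k=n$: the form type Monge-Amp\`ere operator is only degenerate elliptic, and the auxiliary function must be simultaneously compatible with the non-divergence linearisation and with the blow-up rate of $\Phi_{z_0}$, so that the resulting differential inequality closes uniformly in $\epsilon$. The gradient estimate near $z_0$ carries the same tension, since the natural pointwise growth $|Du_\epsilon(z)|\sim|z-z_0|^{-(\gamma_k+1)}$ must be obtained with constants independent of $\epsilon$ and yet sharp enough to produce a genuinely H\"older continuous limit.
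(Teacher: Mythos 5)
Your overall blueprint (approximate by punctured domains, derive $\varepsilon$-uniform interior estimates, pass to the limit, bootstrap for $k<n$) is the one the paper follows, but there is a genuine gap in the way you set up the approximating problems that would undermine the entire construction.

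You propose to solve the \emph{degenerate} problem $S_k(\mu[u_\varepsilon]) = 0$ on $\Omega_\varepsilon$ directly. There is no theory producing smooth admissible solutions to this Dirichlet problem: once the right hand side is allowed to vanish identically, the operator is only degenerate elliptic, the linearised operator $F^{i\bar j}\partial_{i\bar j}$ loses strict ellipticity, and the standard continuity-method existence results you are implicitly invoking (Caffarelli--Nirenberg--Spruck, Li, George--Guan--Qiu, Tosatti--Weinkove, etc.) all require a strictly positive right hand side. Your comparison claims (``$\Phi_{z_0}$ furnishes a global admissible subsolution'') similarly lean on a comparison principle that is not available in the degenerate case. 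The paper circumvents this by \emph{also} regularising the right hand side: it solves $S_k(\mu[u^\varepsilon]) = \varepsilon > 0$ on $\Omega_\varepsilon$ with boundary data $\underline u = v + \Phi$, where $v$ is the smooth admissible solution of a separate non-degenerate Dirichlet problem on all of $\Omega$. That fixes three things at once: (i) existence and smoothness of $u^\varepsilon$ come for free from the non-degenerate theory; (ii) $\underline u$ is a genuine strict subsolution, and $\Phi + C_0$ with $C_0 = \|v\|_{C^0}$ is a genuine strict supersolution, so the comparison $\underline u \le u^\varepsilon \le \Phi + C_0$ is routine; and (iii) one gets the two-sided control $|u^\varepsilon - \Phi| \le C$ that actually pins down the singular asymptotics. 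Note that your proposed sandwich $\Phi_{z_0}+C_1 \le u_\varepsilon \le 0$ is \emph{too weak} for this purpose: the upper bound $u_\varepsilon \le 0$ does not control $u_\varepsilon - \Phi$ from above (since $-\Phi \to +\infty$), so it cannot yield $u = \Phi + O(1)$ at $z_0$.

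Two further, smaller points where your sketch diverges from what is actually needed. First, the auxiliary functions must produce a bound that is automatically satisfied on $\partial B_\varepsilon$, where the derivatives of $u^\varepsilon$ blow up at the rate $\varepsilon^{-\gamma-j}$; the paper achieves this for $k<n$ by including a factor $(|z|^2-\varepsilon^2)^b$ that vanishes on the inner boundary, and for $k=n$ by a separate scaling argument $\tilde u(z) = \varepsilon^\gamma u^\varepsilon(\varepsilon z)$ to estimate on $\partial B_1$ of the rescaled domain. Your ``cut-off $\chi$ equal to $-|z-z_0|^{-\gamma_k}$'' is not doing the same job and you would need to say precisely how it handles the inner boundary. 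Second, the second-order test function used in the paper is $Q = e^{\varphi(P) + \phi(u)}u_{\xi\bar\xi}$ with $P$ the already-bounded gradient quantity and $\varphi(P) = -\tau\log(1-P/M)$, following Chou--Wang/Sz\'ekelyhidi; the delicate part is the two-case analysis on whether $\lambda_n \le -\delta\lambda_1$ or not, which uses the structure of $F^{i\bar i}$ specific to the form type Monge--Amp\`ere operator. That argument, not merely the choice of auxiliary function, is where the $k=n$ estimate actually goes through.
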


Note that when $k<n$, we obtain stronger results with weaker conditions. As we shall see in \eqref{uniformlyelliptic}, this is because equation \eqref{equation0000intro} for $k<n$ is `uniformly elliptic', which allows us to use Evans-Krylov theory \cite{evans1982CPAM,krylov1983EK} in the wake of second order a priori estimates and bootstrap to $C^\infty$, despite the degenerate right hand side and the weaker boundary conditions. When $k=n$, equation \eqref{equation0000intro} is in general not uniformly elliptic, and similar to the Monge-Amp\`{e}re and Hessian equations, the best regularity for the pluricomplex Green function one can hope for is $C^{1,1}$.

The main idea in proving Theorem \ref{maintheorem} and \ref{maintheorem'} is to establish uniform a priori estimates for the Dirichlet problems approximating \eqref{maintheorem1} and \eqref{maintheorem2}. Since the approximate solutions blow up near the singularity $z_0$, we cannot use the standard maximum principle directly. Instead, we need to determine the blow up rates of the gradient and the complex Hessian. The key argument is to construct barrier functions based on the blow up rates so that the test functions are uniformly bounded on the boundaries of the regions approximating $\Omega\setminus\{z_0\}$. Only then can we apply the maximum principle to the test functions and derive uniform bounds for the gradient and the complex Hessian on compact subsets of the punctured domain.  

The standard scheme to establish the existence and regularity for Dirichlet problems of fully nonlinear elliptic equations using the continuity method and the maximum principle was explored in the pioneering work of Caffarelli-Nirenberg-Spruck \cite{caffarelli1985dirichlet}. In general, one may consider on a complex Hermitian manifold $(M,g)$ with smooth boundary
\begin{equation}\label{fullynonlinearCNSintro}
\begin{cases}
    f(\lambda(\chi+i\partial\bar{\partial}u))=h\quad{\rm in\;}M,\\
    u=\varphi\quad{\rm on\;}\partial M,
\end{cases}
\end{equation}
where $\lambda$ are the eigenvalues of $\chi+i\partial\bar{\partial}u$ with respect to the metric $g$, and $f$ is a smooth symmetric function of $\lambda$ defined in an open convex symmetric cone $\Gamma$ with vertex at 0, satisfying certain structural conditions which we shall explain in Section 2. A function $u$ satisfying \eqref{fullynonlinearCNSintro} is called admissible, if $\lambda(i\partial\bar{\partial}u)\in\Gamma$. Under this general framework, Li \cite{li2004dirichlet} solved \eqref{fullynonlinearCNSintro} on $\Omega\subset\mathbb{C}^n$ assuming either existence of admissible subsolutions or geometric conditions of $\partial\Omega$. More recently, the problem was solved by Guan \cite{Guan2014,guan2023dirichlet} on Riemannian manifolds with boundary, and by Sz\'{e}kelyhidi \cite{szekelyhidi2018fully} on compact Hermitian manifolds, assuming the existence of what he calls $\mathcal{C}$-subsolutions.

Another intriguing question is to generalize the pluricomplex Green function to complex manifolds. Semmes \cite{semmes1992generalization} considered the pluricomplex Green function in Riemann mappings. Poletsky \cite{poletsky2020pluricomplex} studied the pluricomplex Green function on hyperconvex manifolds. L{\'a}russon-Sigurdsson \cite{larusson1999plurisubharmonic} treated the pluricomplex Green function on analytic subspaces of complex manifolds. On the other hand, the behavior of the homogeneous complex Monge-Amp\`{e}re equation on compact K\"{a}hler manifolds near singularities is closely related to birational geometry and the theory of K-stability \cite{donaldson2012kahler,guenancia2016conic,jeffres2016kahler}. Phong-Sturm \cite{phong2014singularities} prescibed the singularities of the pluricomplex Green function with isolated poles, and applied these regularity results in the construction of geodesic rays in the space of K\"{a}hler potentials, induced by some given test-configuration. More recently, Darvas-Di Nezza-Lu \cite{darvas2023relative} considered solutions of complex Monge-Amp\`{e}re equations with prescribed singularity profiles using pluripotential theory.

The rest of this paper is organized as follows. Section 2 recalls some background materials, including various notations and useful formulas in the a priori estimates. In Section 3, we construct the pluricomplex Green function, whose existence and regularity depend on the gradient and complex Hessian estimates which we prove in Section 4 and 5.

\section{Preliminaries}

\subsection{General theory}

First of all, we recall the fundamental assumptions on \eqref{fullynonlinearCNSintro} in the spirit of Caffarelli-Nirenberg-Spruck \cite{caffarelli1985dirichlet}. $\Gamma$ is an open convex symmetric cone with vertex at 0, and 
\begin{equation}
    \Gamma_n\subset\Gamma\subset\Gamma_1=\{S_1(\lambda)=\sum\lambda_i>0\}.
\end{equation}
$f$ satisfies 
\begin{enumerate}
    \item (Ellipticity) $f_i>0$ in $\Gamma$ for all $1\leq i\leq n$;
    \item (Concavity) $f$ is concave;
    \item (Non-degenerancy) $\sup_{\partial\Gamma}f<h<\sup_\Gamma f$;
    \item $\lim_{t\rightarrow+\infty}f(t\lambda)=\sup_\Gamma f$ for all $\lambda\in\Gamma$.
\end{enumerate}
The equation is called degenerate, if we allow $\sup_{\partial\Gamma}f=\inf h$. For Hessian type equations, the equation is called homogeneous when $h\equiv0=\sup_{\partial\Gamma_k}S_k$. 

We will need some general formulas for the derivatives of the eigenvalues $\lambda$ as well as the symmetric function $f$. As above, write 
\begin{equation}\label{fullynonlinearCNS}
    F(A):=f(\lambda_1,\cdots,\lambda_n)=h,
\end{equation}
where $A=(a_{i\bar{j}})$ is an Hermitian matrix with eigenvalues $\lambda(A)=(\lambda_1,\cdots,\lambda_n)$.
As is stated in \cite{spruck2005geometric}, the derivatives of $\lambda_i$ with respect to the entries $a_{p\bar{q}},a_{r\bar{s}}$ of $A$ at $A$ diagonal are
\begin{gather}
    \frac{\partial\lambda_i}{\partial a_{p\bar{q}}}:=\lambda_i^{p\bar{q}}=\delta_{ip}\delta_{iq},\\
    \frac{\partial^2\lambda_i}{\partial a_{p\bar{q}}\partial a_{r\bar{s}}}:=\lambda_i^{p\bar{q},r\bar{s}}=(1-\delta_{ip})\frac{\delta_{iq}\delta_{ir}\delta_{ps}}{\lambda_i-\lambda_p}+(1-\delta_{ir})\frac{\delta_{is}\delta_{ip}\delta_{rq}}{\lambda_i-\lambda_r}.
\end{gather}
The derivatives of $F$ with respect to $a_{i\bar{j}},a_{l\bar{m}}$ at $A$ diagonal are (see \cite{andrews1994contraction,gerhardt1996closed})
\begin{gather}
    F^{i\bar{j}}:=\frac{\partial F}{\partial a_{i\bar{j}}}=\delta_{ij}f_i,\\
    \label{2ndderivativeofF}F^{i\bar{j},l\bar{m}}:=\frac{\partial^2F}{\partial a_{i\bar{j}}\partial a_{l\bar{m}}}=f_{il}\delta_{ij}\delta_{lm}+\frac{f_i-f_j}{\lambda_i-\lambda_j}(1-\delta_{ij})\delta_{im}\delta_{jl}.
\end{gather}
Since $f$ is concave and symmetric, we always have $f_i\leq f_j$ whenever $\lambda_i\geq\lambda_j$.

\subsection{Form type $k$-Hessian equations}

Suppose $u$ is a smooth real function defined on some bounded smooth domain $\Omega\subset\mathbb{C}^n$. Denote by $\lambda=(\lambda_1,\cdots,\lambda_n)\in\mathbb{R}^n$ the eigenvalues of the complex Hessian $i\partial\bar{\partial}u$, and set 
\begin{equation}
    \mu_i=\sum_{j\neq i}\lambda_j=S_1(\lambda)-\lambda_i,\quad\mu[u]=(\mu_1,\cdots\mu_n)\in\mathbb{R}^n.
\end{equation} 
For any admissible function $u:\Omega\rightarrow\mathbb{R}$, write 
\begin{equation}\label{equation0000}
    F(u_{i\bar{j}})=f(\lambda)=S_k^{\frac{1}{k}}(\mu[u])=h\geq0,
\end{equation}
where $u$ is called $(n-1)$-$k$-admissible, or admissible in short, if $\mu[u]\in\Gamma_k$ is $k$-admissible, i.e.
\begin{equation}
    \mu[u]\in\Gamma_k=\{\mu\in\mathbb{R}^n:S_j(\mu)>0,j=1,\cdots,k\},
\end{equation}
where
\begin{equation}
    S_k(\mu):=\sum_{1\leq i_1<\cdots<i_k\leq n}\mu_{i_1}\cdots\mu_{i_k}
\end{equation}
is the $k$-th symmetric polynomial. When $k=n$, this corresponds to $(n-1)$-plurisubharmonic functions. It is easy to check that \eqref{equation0000} satisfies the fundamental assumptions as above.

In what follows, we will consider solutions of the homogeneous version (i.e. $h=0$) of equation \eqref{equation0000} with isolated singularities. Denote the linearized operator of $F$ at $u$ by $L=F^{i\bar{j}}\partial_{i\bar{j}}$.
Differentiating \eqref{equation0000}, we have
\begin{gather}
\label{linearizedderivative1}Lu_l=h_l,\quad Lu_{\bar{l}}=h_{\bar{l}},\\
\label{linearizedderivative2}Lu_{\gamma\bar{\gamma}}+F^{i\bar{j},l\bar{m}}u_{i\bar{j}\gamma}u_{l\bar{m}\bar{\gamma}}=h_{\gamma\bar{\gamma}}.
\end{gather}
where, in terms of matrices,
\begin{equation}
    F^{i\bar{j}}=\frac{\partial F}{\partial{u}_{i\bar{j}}}(u_{i\bar{j}})\geq0,\quad F^{i\bar{j},l\bar{m}}=\frac{\partial^2F}{\partial u_{i\bar{j}}\partial u_{l\bar{m}}}(u_{i\bar{j}})\leq0.
\end{equation}
This means that $L$ is elliptic and $F$ is concave. Indeed,  WLOG assume $u_{i\bar{j}}$ and $F^{i\bar{j}}$ are diagonal. Then 
\begin{gather}
      \label{lambdaandmu}\lambda_i=u_{i\bar{i}}=S_1(\lambda)-\mu_i=\frac{S_1(\mu)}{n-1}-\mu_i,\\
      F^{i\bar{i}}=\frac{\partial f}{\partial\lambda_i}=\sum_j\frac{\partial}{\partial\mu_j}(S_k^{\frac{1}{k}}(\mu))\frac{\partial\mu_j}{\partial\lambda_i}=\frac{1}{k}S_k^{\frac{1}{k}-1}(\mu)\sum_{j\neq i}S_{k-1;j}(\mu)\geq0,
\end{gather}
where
\begin{equation}
    S_{k-1;j}(\mu)=\frac{\partial}{\partial\mu_j}S_k(\mu)=S_{k-1}(\mu)\mid_{\mu_j=0}.
\end{equation} 
Similar to the complex Hessian equations, using \eqref{lambdaandmu}, \eqref{p04} and \eqref{p02} we still have
\begin{equation}\label{linearizedsum1}
    \sum_iF^{i\bar{i}}u_{i\bar{i}}=\frac{1}{k}S_k^{\frac{1}{k}-1}(\mu)\sum_i((n-k+1)S_{k-1}(\mu)-S_{k-1;i}(\mu))\left(\frac{S_1(\mu)}{n-1}-\mu_i\right)=h.
\end{equation}
We also adapt the notation 
\begin{equation}
    \mathcal{F}=\sum_iF^{i\bar{i}}=\frac{(n-1)(n-k+1)}{k}S_k^{\frac{1}{k}-1}(\mu)S_{k-1}(\mu)\geq C_{n,k}>0
\end{equation}
following \eqref{p01} and \eqref{p02}.
For clarity we assume $\lambda_1\leq\cdots\leq\lambda_n$ so that $\mu_1\geq\cdots\geq\mu_n$ with $S_{k-1;1}(\mu)\leq\cdots\leq S_{k-1;n}(\mu)$. In this case $F^{1\bar{1}}\geq\cdots\geq F^{n\bar{n}}\geq0$. A critical observation when $1\leq k\leq n-1$ is that for any $1\leq i\leq n$, we always have
\begin{equation}\label{uniformlyelliptic}
    F^{i\bar{i}}\geq\frac{1}{k}S_k^{\frac{1}{k}-1}(\mu)S_{k-1;k}(\mu)\geq c_0\mathcal{F}\geq c_0(n,k)>0
\end{equation}
for some fixed constant $c_0(n,k)>0$ independent of $f$, where we use \eqref{p02} and \eqref{p03}. Equation \eqref{equation0000} for $k<n$ is thus `uniformly elliptic' in a certain sense. Because of this, we may expect higher regularity for the pluricomplex Green function. 

On the other hand, when $k=n$, one usually write 
\begin{equation}\label{equation0000'}
    f(\lambda)=\log(\mu_1\cdots\mu_n)=\log h
\end{equation}
instead of \eqref{equation0000}, which boils down to the complex Monge-Amp\`{e}re equation for $(n-1)$ plurisubharmonic functions \eqref{n-1psheq} on bounded domains $\Omega\subset\mathbb{C}^n$. In this case, all $\mu_i>0$, and after diagonalizing we have
\begin{equation}\label{linearizedoperator'}
    F^{i\bar{i}}=f_i=\sum_{j\neq i}\frac{1}{\mu_j},\quad\mathcal{F}=(n-1)S_{-1}(\mu)
\end{equation}
where we write $S_{-1}(\mu)=\sum_i\mu_i^{-1}$. Similar to the complex Monge-Amp\`{e}re equation, we now have
\begin{equation}\label{linearizedsum1'}
    \sum_iF^{i\bar{i}}u_{i\bar{i}}=\sum_i\left(S_{-1}(\mu)-\frac{1}{\mu_i}\right)\left(\frac{S_1(\mu)}{n-1}-\mu_i\right)=n.
\end{equation}
One can also show that $F^{n\bar{n}}\geq\cdots\geq F^{2\bar{2}}\geq c_0\mathcal{F}$ for some $c_0>0$, assuming $\lambda_1\geq\cdots\geq\lambda_n$, but equation \eqref{equation0000'} is in general not uniformly elliptic.

\subsection{The fundamental solution}
We need to find the fundamental solution of the form type $k$-Hessian equation \eqref{equation0000}. Analogously, we have
\begin{defn}
    The fundamental solution $\Phi(z)$ of equation \eqref{equation0000} is a radially symmetric smooth admissible function defined on the punctured complex Euclidean space $\mathbb{C}^n\setminus\{0\}$ satisfying
    \begin{equation}
        S_k(\mu[\Phi])=0\quad{\rm in\;}\mathbb{C}^n\setminus\{0\}.
    \end{equation}
\end{defn}
To determind $\Phi$, set $\Phi(z)=\phi(|z|^2)$, where $\phi\in C^\infty(\mathbb{R}^+)$. Simple calculations imply
\begin{equation}
    \Phi_{i\bar{j}}(z)=\phi''\bar{z}_iz_j+\phi'\delta_{ij}.
\end{equation}
The eigenvalues of $i\partial\bar{\partial}\Phi$ are thus $\lambda=(\phi',\cdots,\phi',\phi'+\phi''|z|^2)$ and $\mu[\Phi]=(\mu_1,\cdots,\mu_n)$ with
\begin{equation}
    \mu_1=\cdots=\mu_{n-1}=(n-1)\phi'+\phi''|z|^2,\quad\mu_n=(n-1)\phi'.
\end{equation}
It follows that
\begin{equation}
\begin{aligned}
    S_k(\mu[\Phi])&=\binom{n-1}{k}((n-1)\phi'+\phi''|z|^2)^k+\binom{n-1}{k-1}((n-1)\phi'+\phi''|z|^2)^{k-1}(n-1)\phi'\\
    &=\frac{1}{k}\binom{n-1}{k-1}((n-1)\phi'+\phi''|z|^2)^{k-1}((n-k)\phi''|z|^2+n(n-1)\phi').
\end{aligned}
\end{equation}
Therefore, solving the ODE $|z|^2\phi''+(n-1)\phi'=0$ for $k>1$ and $(n-k)|z|^2\phi''+n(n-1)\phi'=0$, we have $\Phi(z)=C_1|z|^{-\gamma}+C_2$ for suitable $\gamma\in\mathbb{R}^+$. Note that we also need $C_1<0$ so that $\Phi$ is admissible. Without loss of generality we omit the constant term and normalize $C_1=-1$. We have

\begin{prop}\label{fundamentalsolution}
    The fundamental solution of equation \eqref{equation0000} is $\Phi(z)=-|z|^{-\gamma}$, where
    \begin{equation}
        \gamma=
        \begin{cases}
            2n-2, & k=1;\\
            \frac{2n^2-4n+2k}{n-k}{\rm\;or\;}2n-4,& 1<k<n;\\
            2n-4, & k=n.
        \end{cases}
    \end{equation}
\end{prop}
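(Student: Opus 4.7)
The work preceding the statement already carries out the radial reduction: for $\Phi(z)=\phi(|z|^2)$ one obtains
\[
S_k(\mu[\Phi])=\frac{1}{k}\binom{n-1}{k-1}\bigl((n-1)\phi'+\phi''|z|^2\bigr)^{k-1}\bigl((n-k)\phi''|z|^2+n(n-1)\phi'\bigr),
\]
so the plan is simply to solve the two factors for zero under the ansatz $\phi(r)=-r^{-\gamma/2}$ and check which choices of $\gamma$ produce an admissible $(n-1)$-$k$-admissible function.

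First I would observe that the product structure splits the analysis into the ODEs
\[
(I)\quad (n-1)\phi'+\phi''|z|^2=0,\qquad (II)\quad (n-k)\phi''|z|^2+n(n-1)\phi'=0,
\]
noting that $(I)$ is irrelevant when $k=1$ (its exponent $k-1$ vanishes) and $(II)$ degenerates to $\phi'\equiv 0$ when $k=n$ (the leading coefficient $n-k$ vanishes). Substituting the ansatz $\phi(r)=-r^{-\gamma/2}$ turns each ODE into a linear algebraic equation in $\gamma$: equation $(I)$ forces $\gamma/2=n-2$, hence $\gamma=2n-4$, while $(II)$ forces $\gamma/2+1=\tfrac{n(n-1)}{n-k}$, hence $\gamma=\tfrac{2n^2-4n+2k}{n-k}$. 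Reading off the admissible cases gives exactly the three-line formula in the statement.

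The only remaining point is admissibility, i.e.\ $\mu[\Phi]\in\overline{\Gamma_k}$. For solution $(I)$ one has $\mu_1=\cdots=\mu_{n-1}=0$ and $\mu_n=(n-1)\phi'=(n-1)(n-2)|z|^{-2(n-1)}>0$, so all $S_j(\mu)\ge 0$ for $j\le n-1$ and $S_n(\mu)=0$; this sits on $\partial\Gamma_k$ and satisfies the equation. For solution $(II)$, a direct substitution shows $\mu_n=(n-1)\phi'>0$ and $\mu_1=\cdots=\mu_{n-1}=(n-1)\phi'+\phi''|z|^2$, whose sign I would compute explicitly in terms of $\gamma$; combined with $S_k(\mu)=0$ forced by construction and positivity of $S_1,\dots,S_{k-1}$ (which follows from a short Newton-inequality or direct check using \eqref{p01}--\eqref{p04}), this places $\mu[\Phi]\in\overline{\Gamma_k}$. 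The sign convention $C_1=-1$ is precisely what makes $\mu_n>0$, which is why the fundamental solution is negative.

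There is no serious obstacle: the hardest step is really just bookkeeping the admissibility check in solution $(II)$ for the range $1<k<n$, where one must verify that the remaining elementary symmetric polynomials are nonnegative. The uniqueness-up-to-affine-constant content of the statement comes directly from the factorisation above, since the ODEs $(I)$ and $(II)$ are the only ways the product can vanish for a nonconstant radial $\phi$.
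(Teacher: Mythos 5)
Your derivation of the exponents is correct and, in substance, identical to the paper's: the radial reduction produces the factorization $S_k(\mu[\Phi])=\frac{1}{k}\binom{n-1}{k-1}\bigl((n-1)\phi'+\phi''|z|^2\bigr)^{k-1}\bigl((n-k)\phi''|z|^2+n(n-1)\phi'\bigr)$, and solving the two ODEs either by plugging in the ansatz or by integrating twice gives $\gamma=2n-4$ from $(I)$ and $\gamma=\frac{2n^2-4n+2k}{n-k}$ from $(II)$; the paper does exactly this.

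The gap is in the admissibility check for solution $(II)$ when $1<k<n$: the claim that ``positivity of $S_1,\dots,S_{k-1}$ follows from a short Newton-inequality or direct check'' is false, and following through on the very computation you defer would reveal this. With $\phi(r)=-r^{-\gamma/2}$ one has $\phi'>0$, $\phi''<0$, hence
\begin{equation}
\mu_1=\cdots=\mu_{n-1}=(n-1)\phi'+\phi''|z|^2=\tfrac{\gamma}{2}|z|^{-\gamma-2}\bigl(n-2-\tfrac{\gamma}{2}\bigr),\qquad \mu_n=(n-1)\phi'>0,
\end{equation}
and $S_1(\mu)=(n-1)\tfrac{\gamma}{2}|z|^{-\gamma-2}\bigl(n-1-\tfrac{\gamma}{2}\bigr)$. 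For the second root $\gamma_k=\frac{2n^2-4n+2k}{n-k}$ one checks $\gamma_k>2n-2$ precisely when $k>1$; hence $\mu_1=\cdots=\mu_{n-1}<0$ and $S_1(\mu)<0$. (Concretely, for $n=3$, $k=2$, $\gamma_k=10$ one finds $\mu=(-20,-20,10)\cdot|z|^{-12}$, so $S_1=-30<0$, $S_2=0$.) More generally, writing $\mu_1=\cdots=\mu_{n-1}=a$, $\mu_n=b$ with $a=-\frac{k}{n-k}b$, one gets $S_j(\mu)=a^{j-1}b\,\binom{n-1}{j-1}\frac{n(j-k)}{j(n-k)}$, which alternates in sign for $j=1,\dots,k-1$; it is not the case that $S_1,\dots,S_{k-1}$ are all nonnegative. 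So $\mu[\Phi]\notin\overline{\Gamma_k}$, and ``$C_1<0$'' does \emph{not} produce admissibility for root $(II)$ in the range $1<k<n$ (it does for $k=1$, where $S_1=0$ exactly, and for root $(I)$, where $\mu_1=\cdots=\mu_{n-1}=0$). You should either carry out this sign check explicitly and confront the contradiction, or flag that the admissibility assertion in the source (which is merely asserted, not proved) does not hold for the branch $(II)$ with $C_1<0$ in the range $1<k<n$; the claim that $\mu_n>0$ alone ``places $\mu[\Phi]\in\overline{\Gamma_k}$'' is not a valid inference.
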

Note that when $k=1$, the fundamental solution $-|z|^{2-2n}$ in the sense of Proposition \ref{fundamentalsolution} is just the fundamental solution of the Laplace equation in $\mathbb{R}^{2n}$. When $\gamma=2n-4$, we always have $\mu_1=\cdots=\mu_{n-1}=0$, so that $S_k(\mu)=0$ whenever $k>1$. 

\subsection{Existence of subsolutions}\label{meanconvex}

Consider the non-degenerate Dirichlet problem of equation \eqref{equation0000} and \eqref{equation0000'} on a bounded smooth domain
\begin{equation}\label{dirichlet0000}
    \begin{cases}
        S_k(\mu[u])=h>0 \quad{\rm in\;}\Omega,\\
        u=\varphi\quad{\rm on\;}\partial\Omega,\\
    \end{cases}
\end{equation}
where $h,\varphi$ are smooth functions, and WLOG $0\in\Omega$. If $k<n$, since \eqref{equation0000} is uniformly elliptic, we know from the classical theory of fully nonlinear uniformly elliptic equations (see \cite{caffarelli1995fully}) that smooth solutions to \eqref{dirichlet0000} always exist. 

When $k=n$, we need a natural geometric condition on the boundary $\partial\Omega$ to construct subsolutions and solve problem \eqref{dirichlet0000}. 

\begin{defn}\label{pseudoconvexity}
Suppose $\Omega\subset\mathbb{C}^n$ is a bounded domain with smooth boundary. A defining function of $\Omega$ is a smooth function $\sigma\in C^\infty(\overline{\Omega})$ such that 
\begin{equation}
    \sigma<0{\;\rm in\;}\Omega{\;\rm and\;}\sigma=0,\nabla\sigma\neq0{\;\rm on\;}\partial\Omega.
\end{equation}


$\Omega$ is called a pseudo mean-convex (or 1-pseudoconvex) domain, if 
for any $z_0\in\partial\Omega$,  under holomorphic normal coordinates $(z_1,\cdots,z_n)$ where $x_n={\rm Re}(z_n)$ is the interior normal direction of $\partial\Omega$ at $z_0$, 
\begin{equation}
    {\rm tr}\sigma_{l\bar{m}}(z_0)\geq c_0>0,
\end{equation}
where $\sigma_{l\bar{m}}(z_0)=(\frac{\partial^2\sigma}{\partial z_l\partial\bar{z}_m}(z_0))_{1\leq l,m\leq n-1}$ is the Levi form of $\sigma$, namely the $(n-1)\times(n-1)$ complex Hessian matrix of $\sigma$ restricted to ${\rm Span}(z_1,\cdots,z_{n-1})$.
\end{defn}

With the extra condition on $\Omega$, consider the function
\begin{equation}
    \rho(z)=\sigma(z)+N\sigma^2(z)\quad{\rm on\;}\overline{\Omega},
\end{equation}
where $\sigma$ is a defining function of $\Omega$ and $N\gg1$ is a large constant. The Levi form of $\rho$ coincides with that of $\sigma$ on $\partial\Omega$, while $\rho_{n\bar{n}}(z_0)=N$ for all $z_0\in\partial\Omega$. Here $x_n$ is the interior normal direction of $\partial\Omega$ at $z_0$. By Lemma 2.1 in \cite{caffarelli1985dirichlet}, we have on $\partial\Omega$
\begin{equation}
\lambda_l(i\partial\bar{\partial}\rho)=\lambda_l(i\partial\bar{\partial}\sigma)+o(1),\quad\lambda_n(i\partial\bar{\partial}\rho)=N+O(1),\quad1\leq l\leq n-1.
\end{equation}
Thus for $N$ sufficiently large,
\begin{equation}\label{cal200}
\mu_l[\rho]=N+O(1)>0,\quad\mu_n[\rho]={\rm tr}\sigma_{l\bar{m}}+o(1).
\end{equation}
Therefore, we know $\mu[\rho]\in\Gamma_n$ on $\partial\Omega$ when $\Omega$ is pseudo mean-convex, and $S_n(\mu[\rho])\geq c_0N^{n-1}$ on $\partial\Omega$. By continuity, $S_n(\mu[\rho])\geq aN^{n-1}>0$ near $\partial\Omega$. 
Note that $\rho$ is also a defining function of $\Omega$ (at least up to subtracting a large positive cut-off function), so that $\rho=0$ on $\partial\Omega$ and $\rho<0$ in $\Omega$. Define the function
\begin{equation}
    \underline{u}=\varphi+\sup\{A\rho,B(|z|^2-d^2)\}\quad {\rm on}\;\overline{\Omega},
\end{equation}
where $d={\rm diam}(\Omega)$ and $A,B$ are positive constants. Clearly $\underline{u}=\varphi$ on $\partial\Omega$. If we choose $A\gg B$ very large, then $\underline{u}=\varphi+A\rho$ only very close to $\partial\Omega$, where we have $S_n(\mu[\underline{u}])\geq S_n(\mu[\varphi])+AaN^{n-1}>h$. Away from $\partial\Omega$ we always have $\underline{u}=\varphi+B(|z|^2-d^2)$, where we also have $S_n(\mu[\underline{u}])\geq S_n(\mu[\varphi])+B(n-1)^n>h$, provided we take $B$ sufficiently large. This means that $\underline{u}$ is a subsolution to problem \eqref{dirichlet0000}. It follows that smooth solutions to the Dirichlet problem exist. 

\begin{rem}
    For the complex Monge-Amp\`{e}re equation, it is well-known in \cite{caffarelli1985dirichlet} that the strong pseudoconvexity of $\Omega$ ensures the solvability of the Dirichlet problem. For complex $k$-Hessian equations, this natural condition is the so-called $(k-1)$-pseodoconvexity (see \cite{li2004dirichlet}), which means that the Levi form of the defining function has eigenvalues in $\Gamma_{k-1}$. These geometric conditions on the boundary are unified to `$\Gamma$-pseudoconvexity'
    in \cite{li2004dirichlet}, which varies with the admissible cone $\Gamma$ for different nonlinear equations. It is also worth noting that here when $k<n$, any bounded smooth domain is $\Gamma$-pseudoconvex, namely no extra geometric conditions are needed to solve the Dirichlet problem. This can be seen from the same construction of subsolutions as the $k=n$ case; the only difference is that \eqref{cal200} is already enough for $\mu[\rho]\in\Gamma_k$ for $k<n$ regardless of the boundary.

\end{rem}
\subsection{Symmetric polynomials}
We will need the following fundamental properties of symmetric polynomials in the proof of a priori estimates. Suppose $\mu\in\Gamma_k$ and $\mu_1\geq\cdots\geq\mu_n$. The $k$-symmetric polynomials are denoted by $S_k=S_k(\mu)$ with $S_0=1$ and $S_l=0$ for all $l>n$. For every $1\leq k,i\leq n$, we always have
\begin{gather}\label{p04}
    S_k=S_{k;i}+\mu_iS_{k-1;i},\\
\label{p01}
    S_k^{\frac{1}{k}}\leq C_{n,k}S_{k-1}^{\frac{1}{k-1}},\\
\label{p02}
    \sum_{i=1}^nS_{k-1;i}=(n-k+1)S_{k-1},\\
\label{p03}
    S_{k-1;k}\geq C_{n,k}\sum_{i=1}^nS_{k-1;i}.
\end{gather}

We refer the readers to \cite{wang2009k} and the references therein for more of the same as well as their proofs.

\section{The pluricomplex Green function}

Let $k<n$ and $\Omega\subset{\mathbb{C}^n}$ be a bounded smooth domain. WLOG assume $z_0=0\in B_1(0)\subset\Omega$. As is discussed in Section \ref{meanconvex}, there exists an admissible solution $v\in C^\infty(\overline{\Omega})$ to the Dirchlet problem
\begin{equation}
    \begin{cases}
        S_k(\mu[v])=1 \quad{\rm in\;}\Omega,\\
        v=\varphi-\Phi\quad{\rm on\;}\partial\Omega.\\
    \end{cases}
\end{equation}
Set $\underline{u}=v+\Phi\in C^\infty(\overline{\Omega}\setminus\{0\})$. The concavity of equation \eqref{equation0000} and the maximum principle imply that $\underline{u}$ satisfies
\begin{equation}
    \begin{cases}
        S_k(\mu[\underline{u}])\geq1 \quad{\rm in\;}\Omega\setminus\{0\},\\
        \underline{u}=\varphi\quad{\rm on\;}\partial\Omega,\\
        \underline{u}\leq\varphi\quad{\rm in\;}\Omega\setminus\{0\},\\
        \underline{u}(z)=\Phi(z)+O(1)\quad{\rm as\;}z\rightarrow0.
    \end{cases}
\end{equation}

Fix some $\varepsilon_0\in(0,1]$. For $\varepsilon\in(0,\varepsilon_0)$ sufficiently small, set $\Omega_\varepsilon=\Omega\setminus\overline{B_\varepsilon(0)}$ and consider the Dirichlet problem
\begin{equation}\label{approxdirichletproblem}
    \begin{cases}
        S_k(\mu[u^\varepsilon])=\varepsilon \quad{\rm in\;}\Omega_\varepsilon,\\
        u^\varepsilon=\underline{u}\quad{\rm on\;}\partial\Omega_\varepsilon.\\
    \end{cases}
\end{equation}
Note that $\underline{u}$ is now a subsolution. We know from \cite{george2022fully} that there exists a unique admissible solution $u^
\varepsilon\in C^\infty(\overline{\Omega}_\varepsilon)$ of equation \eqref{approxdirichletproblem}. The maximum principle implies that $u^\varepsilon$ is monotone decreasing in $\varepsilon$, namely for any $0<\varepsilon'\leq\varepsilon<\varepsilon_0$, 
\begin{equation}
    \underline{u}\leq u^\varepsilon\leq u^{\varepsilon'}\leq\Phi+C\quad{\rm in\;}\Omega_\varepsilon.
\end{equation}
Thus the pointwise limit
\begin{equation}
    u(z):=\lim_{\varepsilon\rightarrow0}u^\varepsilon(z)
\end{equation}
exists for all $z\in\overline{\Omega}\setminus\{0\}$. 

It suffices to establish the regularity results for $u$ as stated in Theorem \ref{maintheorem}. To this end, we will need a priori estimates of $u^\varepsilon$ up to second order:
\begin{theorem}\label{aprioriestimates}
    For $1\leq k\leq n-1$ in equation \eqref{approxdirichletproblem}, there exists a constant $C>0$ independent of $\varepsilon$ such that in $\overline{\Omega}_\varepsilon$,
    \begin{equation}\label{C0estimate}
        |u^\varepsilon(z)|\leq C|z|^{-\gamma},
    \end{equation}
    \begin{equation}\label{C1estimate}
        |\nabla u^\varepsilon(z)|\leq C|z|^{1-\gamma}(|z|^2-\varepsilon^2)^{-1},
    \end{equation}
    \begin{equation}\label{C2estimate}
        |u^\varepsilon_{i\bar{j}}(z)|\leq C|z|^{2-\gamma}(|z|^2-\varepsilon^2)^{-2}.
    \end{equation}
\end{theorem}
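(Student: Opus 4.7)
The plan is to establish the estimates \eqref{C0estimate}--\eqref{C2estimate} in turn, each by applying the maximum principle to a test function whose spatial weight vanishes on the inner boundary $\partial B_\varepsilon(0)$ and stays bounded on the outer boundary $\partial\Omega$. The exact powers of $|z|$ and of $(|z|^2 - \varepsilon^2)$ appearing in the weights are forced by the self-similar scaling $z \mapsto \varepsilon z$, $u \mapsto \varepsilon^{-\gamma} u$, which preserves the homogeneous equation \eqref{equation0000intro} together with the fundamental-solution profile $\Phi(z) = -|z|^{-\gamma}$; any other exponent would either fail to bound the test function on $\partial B_\varepsilon$ or leave uncontrollable interior terms after $L$ is applied.

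The $C^0$ bound \eqref{C0estimate} is pure comparison: the subsolution $\underline u = v + \Phi$ of \eqref{approxdirichletproblem} gives $u^\varepsilon \geq \underline u \geq \Phi - C$ by the maximum principle, while $\Phi + C$ is a supersolution of \eqref{approxdirichletproblem} dominating $u^\varepsilon$ on $\partial\Omega_\varepsilon$ once $C$ is chosen large enough, so $u^\varepsilon \leq \Phi + C$. Since $|z|^{-\gamma} \geq ({\rm diam}\,\Omega)^{-\gamma}$ throughout $\overline{\Omega}_\varepsilon$, the additive constant is absorbed into the multiplicative one, giving \eqref{C0estimate}.

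For the $C^1$ bound I would apply the maximum principle to
\begin{equation*}
W = |\nabla u^\varepsilon|^2 \, (|z|^2 - \varepsilon^2)^2 \, |z|^{2\gamma - 2} \, e^{\psi(u^\varepsilon)},
\end{equation*}
with $\psi$ an appropriate increasing convex function. The factor $(|z|^2 - \varepsilon^2)^2$ makes $W$ vanish on $\partial B_\varepsilon(0)$, and on $\partial\Omega$ the classical boundary gradient estimate (cf.\ \cite{caffarelli1985dirichlet, Guan2014}) gives $|\nabla u^\varepsilon| \leq C$ uniformly in $\varepsilon$, because $u^\varepsilon|_{\partial\Omega} = \varphi$ is fixed and $\underline u$ serves as a uniform subsolution. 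At an interior maximum of $W$ one computes $LW$ using \eqref{linearizedderivative1}, the concavity identity from \eqref{2ndderivativeofF}, and the uniform ellipticity \eqref{uniformlyelliptic}; the exponent $2\gamma - 2$ is selected precisely so that the leading part of $L(|z|^{2\gamma - 2})$ cancels the positive contribution from $L|\nabla u^\varepsilon|^2$, while the factor $e^\psi$ supplies negative terms absorbing the remaining cross terms. This yields $W \leq C$ and hence \eqref{C1estimate}.

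The $C^2$ estimate follows the same scheme. A natural test function is
\begin{equation*}
\widetilde W = \lambda_{\max}(u^\varepsilon_{i\bar j}) \, (|z|^2 - \varepsilon^2)^2 \, |z|^{\gamma - 2} \, e^{\eta(u^\varepsilon) + A |\nabla u^\varepsilon|^2},
\end{equation*}
where $\lambda_{\max}$ denotes the largest eigenvalue of the complex Hessian, $A \gg 1$, and $\eta$ is a suitable convex auxiliary function in the spirit of Hou-Ma-Wu. Once more $\widetilde W$ vanishes on $\partial B_\varepsilon(0)$ and is bounded on $\partial\Omega$ by the classical boundary second-order estimates, using $\underline u$ as subsolution. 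At an interior maximum one differentiates the equation twice to obtain \eqref{linearizedderivative2}, controls the second-derivative term in $F$ by concavity, uses the uniform ellipticity \eqref{uniformlyelliptic} (crucially available only for $k \leq n-1$) to conclude that each $F^{i\bar i} \geq c_0 \mathcal F$, and invokes the already-established estimate \eqref{C1estimate} to handle the $A|\nabla u^\varepsilon|^2$ contribution. The main obstacle, and the technical heart of the theorem, is to verify that the cross terms generated by the spatial weight $|z|^{\gamma - 2}(|z|^2 - \varepsilon^2)^2$ and by the $A|\nabla u^\varepsilon|^2$ factor can indeed be absorbed by $c_0 \mathcal F$ times the good third-order terms together with the negative contributions from $\psi''$ and $\eta''$: the algebra is tight, and the scaling-forced exponents $2\gamma - 2$ and $\gamma - 2$ are the unique choices that keep the calculation closed.
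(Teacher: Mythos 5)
Your $C^0$ argument matches the paper's exactly, and the overall scheme — test functions with an explicit $(|z|^2-\varepsilon^2)^2$ factor to vanish on $\partial B_\varepsilon$, boundary estimates to bound the test quantity on $\partial\Omega$, then the interior maximum principle — is also structurally the same. However, there are two concrete gaps in the weights you chose for the $C^1$ and $C^2$ steps.

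First, for the gradient estimate you use the spatial power $|z|^{2\gamma-2}$ \emph{together with} a separate $u$-weight $e^{\psi(u)}$. The paper instead uses a single $u$-weight $(-u)^{-a}$ with $a = 2 - 2/\gamma$. Since $|u^\varepsilon| \approx |z|^{-\gamma}$ by \eqref{C0estimate}, this factor already \emph{is} (up to constants) the power $|z|^{a\gamma} = |z|^{2\gamma-2}$; more importantly, taking $\phi(u) = -a\log(-u)$ makes $\phi'' - c|\phi'|^2 = \tfrac{a(1-ca)}{u^2} > 0$ grow like $u^{-2}$ near the singularity — exactly strong enough to absorb the bad cross terms, which also grow there. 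If instead your $\psi$ is bounded (so that $W \leq C$ actually implies \eqref{C1estimate}), then $\psi''$ and $|\psi'|^2$ decay as $u\to-\infty$ and the interior estimate does not close; if $\psi$ is unbounded so that $\psi''$ helps, then $W\leq C$ yields a bound weaker than \eqref{C1estimate} by a diverging factor $e^{-\psi(u)}$. The single factor $(-u)^{-a}$ resolves both needs simultaneously, and that coupling is exactly why $a$ must equal $2 - 2/\gamma$. Also, you say the exponent is chosen "so that the leading part of $L(|z|^{2\gamma-2})$ cancels the positive contribution from $L|\nabla u^\varepsilon|^2$" — but those positive terms are the good terms; the goal is to keep them and use them, together with the $\phi''$-terms, to dominate the bad cross terms coming from $|\partial_i|\nabla u|^2|^2/|\nabla u|^4$.

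Second, and more seriously, the $C^2$ test function $\widetilde W = \lambda_{\max}\,(|z|^2-\varepsilon^2)^2\,|z|^{\gamma-2}\,e^{\eta(u)+A|\nabla u^\varepsilon|^2}$ cannot be uniformly bounded in $\varepsilon$: at $|z| = 2\varepsilon$ one has $|\nabla u^\varepsilon|^2 \sim \varepsilon^{-2\gamma-2}$ by \eqref{C1estimate}, so $e^{A|\nabla u^\varepsilon|^2} \sim e^{A\varepsilon^{-2\gamma-2}}$, while the remaining factors are of order $\varepsilon^{\gamma+2}\cdot\varepsilon^{-\gamma-2} = O(1)$ under \eqref{C2estimate}. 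Hence $\widetilde W$ diverges as $\varepsilon\to 0$, and a bound $\widetilde W\leq C$ independent of $\varepsilon$ is impossible. The paper avoids this by replacing the raw Hou-Ma-Wu amplifier $e^{A|\nabla u|^2}$ with $e^{\varphi(P)}$ where $P$ is the already-controlled gradient test quantity and $\varphi(P)=-\tau\log(1-P/M)$; since $P\leq M/2$ by the $C^1$ step, $\varphi(P)\in[0,\tau\log 2]$ is \emph{uniformly bounded}, yet $\varphi'' = \tau^{-1}|\varphi'|^2$ still supplies the negative $|P_i|^2$ terms needed to absorb the third-order cross terms. This composition $\varphi\circ P$ is not cosmetic; it is what makes the $C^2$ step compatible with a solution that blows up at the puncture. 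I'd also note that the $u$-weight in the $C^2$ step must be $(-u)^{-(a-1)}$ rather than $(-u)^{-a}$ (the exponent drops by one), again fixed by the target power $|z|^{2-\gamma}$ through $(a-1)\gamma = \gamma-2$.
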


Once Theorem \ref{aprioriestimates} is justified, by \eqref{uniformlyelliptic} and \eqref{C2estimate}, equation \eqref{approxdirichletproblem} is uniformly elliptic at any solution $u^\varepsilon$, with ellipticity constants independent of $\varepsilon$. By Evans-Krylov theory \cite{evans1982CPAM,krylov1983EK}, we gain an extra $C^{\alpha}$ regularity for $i\partial\bar{\partial}u^\varepsilon$ in $K$, which means that $u^\varepsilon$ is uniformly bounded in $C^{2,\alpha}(K)$. Using classical Schauder theory, we see $u\in C^{\infty}(\overline{\Omega}\setminus\{0\})$ is an admissible solution to the Dirichlet problem in a punctured domain
\begin{equation}\label{punctureddirichletproblem}
    \begin{cases}
        S_k(\mu[u])=0 \quad{\rm in\;}\overline{\Omega}\setminus\{0\},\\
        u=\varphi\quad{\rm on\;}\partial\Omega,\\
        u(z)=\Phi(z)+O(1)\quad{\rm as\;}z\rightarrow0.
    \end{cases}
\end{equation}

We call this $u$ the pluricomplex Green function for equation \eqref{equation0000}.

Note that the above constructions also work for the $k=n$ case, except that we now need $\varphi$ constant and the boundary of $\Omega$ pseudo mean-convex for the existence of $v$ and to work out the following a priori estimates:

\begin{theorem}\label{aprioriestimatesfork=n}
    For $k=n,\varphi=0$ and $\Omega$ 1-pseudoconvex in equation \eqref{approxdirichletproblem}, there exists a constant $C>0$ independent of $\varepsilon$ such that in $\overline{\Omega}_\varepsilon$,
    \begin{equation}\label{C0estimate'}
        |u^\varepsilon(z)|\leq C|z|^{-\gamma},
    \end{equation}
    \begin{equation}\label{C1estimate'}
        |\nabla u^\varepsilon(z)|\leq C|z|^{-\gamma-1},
    \end{equation}
    \begin{equation}\label{C2estimate'}
        |u^\varepsilon_{i\bar{j}}(z)|\leq C|z|^{-\gamma-2}.
    \end{equation}
\end{theorem}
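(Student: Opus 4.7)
My plan is to adapt the barrier-plus-maximum-principle strategy of Theorem \ref{aprioriestimates} to the degenerate $k=n$ equation \eqref{equation0000'}, exploiting the 1-pseudoconvexity of $\partial\Omega$ and the zero boundary data $\varphi=0$. The three estimates are established sequentially, each serving as input for the next.

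\textbf{Step 1 ($C^0$).} The admissible subsolution $\underline{u}=v+\Phi$ from Section \ref{meanconvex}, which exists precisely because of the 1-pseudoconvexity of $\partial\Omega$, sandwiches $u^\varepsilon$ via the comparison principle: $\underline{u}\leq u^\varepsilon\leq\Phi+C$ on $\Omega_\varepsilon$. Since $v$ is smooth and bounded on $\overline{\Omega}$ and $\Phi(z)=-|z|^{-\gamma}$, \eqref{C0estimate'} follows at once.

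\textbf{Step 2 ($C^1$).} Since the right-hand side of \eqref{approxdirichletproblem} is the constant $\varepsilon$, differentiation yields $Lu^\varepsilon_l=0$ for every $l$, where $L=F^{i\bar{j}}\partial_{i\bar{j}}$. I would apply the maximum principle on $\overline{\Omega}_\varepsilon$ to a test function of the form
$$W(z)=|z|^{2\gamma+2}|\nabla u^\varepsilon|^2-\Lambda u^\varepsilon,$$
with $\Lambda\gg1$ to be chosen. Using $Lu^\varepsilon_l=0$, the identity $Lu^\varepsilon=n$ from \eqref{linearizedsum1'}, and the critical-point relation $\nabla W=0$ to eliminate the cross terms $F^{i\bar{j}}(|z|^{2\gamma+2})_i(|\nabla u^\varepsilon|^2)_{\bar{j}}$, the interior maximum of $W$ is shown to be uniformly bounded. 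Boundary values of $W$ on $\partial B_\varepsilon(0)$ are handled by matching with $\underline{u}=v+\Phi$ and the explicit bound $|z|^{2\gamma+2}|\nabla\Phi|^2\leq C$; on $\partial\Omega$, the zero boundary data together with $\underline{u}$ as a lower barrier yield a uniform two-sided bound on $\partial_\nu u^\varepsilon$ by a standard Hopf-type argument. Combining these gives \eqref{C1estimate'}.

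\textbf{Step 3 ($C^2$).} Differentiating the equation twice gives
$$Lu^\varepsilon_{\gamma\bar{\gamma}}=-F^{i\bar{j},l\bar{m}}u^\varepsilon_{i\bar{j}\gamma}u^\varepsilon_{l\bar{m}\bar{\gamma}}\geq0$$
by concavity. I apply the maximum principle to
$$W(z)=|z|^{2\gamma+4}\lambda_{\max}(u^\varepsilon_{i\bar{j}})+A|z|^{2\gamma+2}|\nabla u^\varepsilon|^2-Bu^\varepsilon,$$
with $A,B\gg1$ chosen in that order. After diagonalizing at the interior maximum so that $\lambda_{\max}$ is attained by a fixed eigendirection, the nonnegative concavity term absorbs the third-order commutator errors, the weighted-gradient term absorbs the errors produced when $L$ hits the weight $|z|^{2\gamma+4}$ (using Step 2 for the pointwise $C^1$ bound), and $-Bu^\varepsilon$ contributes $-Bn$ to dominate the remaining positive zeroth-order terms. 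Boundary values on $\partial B_\varepsilon(0)$ are controlled by matching with $\underline{u}$ and $|D^2\Phi|\sim|z|^{-\gamma-2}$; on $\partial\Omega$, one uses the Caffarelli-Nirenberg-Spruck decomposition, where the tangential-tangential and tangential-normal pieces follow from $u^\varepsilon|_{\partial\Omega}=0$ together with Step 2, and the normal-normal component is handled by the barrier $\rho=\sigma+N\sigma^2$ from \eqref{cal200}, for which 1-pseudoconvexity is exactly the required hypothesis.

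\textbf{Main obstacle.} The crux is the $C^2$ estimate, and for two intertwined reasons. First, equation \eqref{equation0000'} is only degenerately elliptic in the $k=n$ case, so individual $F^{i\bar{i}}$ may become arbitrarily small and one cannot crudely discard the third-order term; a Hou-Ma-Wu-type splitting of $-F^{i\bar{j},l\bar{m}}u^\varepsilon_{i\bar{j}\gamma}u^\varepsilon_{l\bar{m}\bar{\gamma}}$ into its diagonal contribution and the genuine concavity contribution is needed to recover the sharp blow-up rate $|z|^{-\gamma-2}$ all the way down to $\partial B_\varepsilon(0)$. Second, the normal-normal boundary estimate on $\partial\Omega$ must be carried out in a tubular neighborhood kept away from the singularity at $z=0$, because the global $C^0$ control of Step 1 degenerates there; 1-pseudoconvexity provides exactly the strength needed for this localized barrier construction but leaves essentially no slack, so the argument is right at the boundary of what is possible.
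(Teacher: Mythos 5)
Your outline is in the same general spirit as the paper (maximum principle applied to a penalized test function, with boundary estimates split into $\partial\Omega$ and $\partial B_\varepsilon$, and 1-pseudoconvexity used for the normal--normal component on $\partial\Omega$), but the specific test functions you propose in Steps 2 and 3 do not work in the $k=n$ case, and the reason is precisely the degeneracy you flag in your ``Main obstacle''.

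For the gradient estimate, you use an \emph{additive} barrier $W = |z|^{2\gamma+2}|\nabla u^\varepsilon|^2 - \Lambda u^\varepsilon$. This has the wrong homogeneity near the singularity: even if the interior maximum value of $W$ were shown to be a uniform constant $C$, you would only conclude $|z|^{2\gamma+2}|\nabla u^\varepsilon|^2 \leq C + \Lambda(-u^\varepsilon) \leq C + C\Lambda|z|^{-\gamma}$, hence $|\nabla u^\varepsilon|^2 \leq C|z|^{-3\gamma-2}$, which is strictly worse than the claimed $|z|^{-2\gamma-2}$. The paper instead uses the \emph{multiplicative} weight $P = (-u)^{-\beta}|\nabla u|^2$ with $\beta = 2 + 2/\gamma = 2 + \tfrac{1}{n-2}$, chosen exactly so that boundedness of $P$ is \emph{equivalent} to the sharp bound $|\nabla u^\varepsilon| \leq C|z|^{-\gamma-1}$ via the $C^0$ estimate. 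Moreover, the radial weight $|z|^{2\gamma+2}$ does not help here: when you differentiate it with $L$ you produce $F^{i\bar{i}}|z_i|^2$ and $\mathcal{F}$ terms that, in the $k<n$ case, are controlled by $F^{i\bar{i}}|u_i|^2/u^2$ thanks to the uniform ellipticity $F^{i\bar{i}} \geq c_0\mathcal{F}$, but for $k=n$ there is no such lower bound on $F^{1\bar{1}}$, and the paper's interior argument deliberately avoids any radial weight and instead carries out a case split over $\Lambda^+ = \{\lambda_i \geq 0\}$ versus $\Lambda^- = \{\lambda_i < 0\}$ together with the identities \eqref{linearizedsum1'}, \eqref{linearizedoperator'} to close the estimate (and this is exactly where $n\geq3$ is needed). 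The boundary estimate on $\partial B_\varepsilon$ then has to be done separately, by rescaling $\tilde u(z) = \varepsilon^\gamma u^\varepsilon(\varepsilon z)$ and trapping $\tilde u$ between its own subsolution and a harmonic function, since the rescaled $P$ need not vanish there.

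The same two defects appear in your Step 3: the additive $-Bu^\varepsilon$ again spoils the homogeneity, and the radial weight $|z|^{2\gamma+4}$ injects terms that cannot be absorbed without uniform ellipticity. The paper's $k=n$ Hessian test function is $Q = e^{\varphi(P) + \phi(u)}u_{\xi\bar\xi}$ with $\varphi(P) = -\tau\log(1 - P/M)$ feeding the already-proved gradient bound into the weight, no radial factor, and then the dichotomy $\lambda_n \leq -\delta\lambda_1$ versus $\lambda_n > -\delta\lambda_1$, together with the index set $I = \{i : F^{1\bar1} < \delta F^{i\bar i}\}$, to make the concavity term $-F^{i\bar j, l\bar m}u_{i\bar j 1}u_{l\bar m\bar1}$ dominate the third-order errors. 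You correctly diagnose that a Hou--Ma--Wu-type splitting is needed, but the test function you wrote down is not set up to implement it. To fix the proposal you would want to switch to multiplicative weights homogeneous in $u$, drop the radial factors, import the gradient bound $P\leq M$ into the Hessian test via the $\log(1-P/M)$ device, and carry out the eigenvalue dichotomy explicitly.
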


If \eqref{C2estimate'} holds, then for any compact set $K\subset\overline{\Omega}\setminus\{0\}$ and $\varepsilon>0$ sufficiently small so that $K\subset\overline{\Omega}_\varepsilon$, we have
\begin{equation}
    \|\Delta u^\varepsilon\|_{L^\infty(K)}=\|{\rm tr}(u^\varepsilon_{i\bar{j}})\|_{L^\infty(K)}\leq C_K
\end{equation}
for some constant $C_K>0$ independent of $\varepsilon$. By $W^{2,p}$ theory and Sobolev embedding, we know $u^\varepsilon$ is uniformly bounded in $C^{1,\alpha'}(K)$ for any $0<\alpha'<1$. By the Arzel\`{a}-Ascoli theorem, we assert that for all $0<\alpha<1$, up to a subsequence $u^\varepsilon$ converges to some $u$ in $C^{1,\alpha}(K)$, and that $u\in C^{1,\alpha}(\overline{\Omega}\setminus\{0\})$ is an admissible solution to \eqref{punctureddirichletproblem} for $k=n$ and $\varphi=0$.

The $C^0$ estimates in Theorem \ref{aprioriestimates} and \ref{aprioriestimatesfork=n} follow readily by construction. Indeed, we have
\begin{equation}
    \underline{u}\leq u^\varepsilon\leq\Phi+C_0,
\end{equation}
where $C_0=\|v\|_{C^0(\overline{\Omega})}>0$ is a uniform constant in $\varepsilon$. Thus, in $\overline{\Omega}_\varepsilon$,
\begin{equation}
    |u^\varepsilon(z)|\leq|z|^{-\gamma}+C_0\leq C|z|^{-\gamma}.
\end{equation}
In the next two sections, we move on to the gradient and the complex Hessian estimates for both cases. 

Before that, we would like to specify some useful notations. The $k=n$ case requires establishing derivative estimates on $\partial B_\varepsilon$. To this end, we need to apply a scaling argument by setting $\tilde{u}(z)=\varepsilon^{\gamma}u^\varepsilon(\varepsilon z)$ and $\tilde{\underline{u}}(z)=\varepsilon^{\gamma}\underline{u}(\varepsilon z)$ for $z\in D\setminus B_1$, where $D=\{z\in\mathbb{C}^n:\varepsilon z\in\Omega\}$. Then
\begin{equation}
    \begin{cases}
     S_k(\mu[u])=\varepsilon^{1+(\gamma+2)k}\quad {\rm{in}}\;D\setminus B_1,\\
     \tilde{u}=\tilde{\underline{u}}\quad {\rm{on}}\;\partial(D\setminus B_1).
    \end{cases}
\end{equation}
and
\begin{equation}
    \tilde{\underline{u}}(z)=\varepsilon^{\gamma}v(\varepsilon z)-|z|^{-\gamma}\quad{\rm in }\;D\setminus B_1.
\end{equation}
Clearly we have for any $z\in \overline{D}\setminus B_1$ and $w=\varepsilon z\in\Omega_\varepsilon$,
\begin{equation}\label{cal007}
    \nabla\tilde{u}(z)=\varepsilon^{1+\gamma}\nabla u(w),\quad\partial\bar{\partial}\tilde{u}(z)=\varepsilon^{2+\gamma}\partial\bar{\partial}u(w);
\end{equation}and for any $j\in\mathbb{N}$,
\begin{equation}\label{estimatesforsubsolution}
    |\nabla^j\tilde{\underline{u}}(z)|\leq\varepsilon^{\gamma+j}\|v\|_{C^j(\overline{\Omega})}+C_{n,k}|z|^{-\gamma-j}\leq C(\varepsilon^{\gamma+j}+1)\leq C.
\end{equation}

\section{Gradient estimates}

\subsection{Estimates on $\partial\Omega$}

WLOG assume $B_{r_0}(0)\subset\Omega$ for some fixed $r_0>0$. Taking $0<\varepsilon\ll r_0$, we have $\Omega_\varepsilon\subset\Omega_{r_0}\subset\Omega$. Let $h$ be the smooth harmonic function on $\Omega_{r_0}$ with boundary value
\begin{equation}
    h|_{\partial\Omega}=\varphi,\quad h|_{B_{r_0}}=-r_0^{2p}+C_0.
\end{equation}
By construction, we have
\begin{equation}
    \begin{cases}
\Delta u^\varepsilon=\frac{1}{n-1}S_1(\mu[u])>0=\Delta h\quad{\rm in\;}\Omega_{r_0},\\
        u^\varepsilon=h=\varphi\quad{\rm on\;}\partial\Omega,\\
        u^\varepsilon\leq\Phi+C_0=h\quad{\rm on\;}\partial B_{r_0}.
    \end{cases}
\end{equation}
The maximum principle then implies that $\underline{u}\leq u^\varepsilon\leq h$ in $\Omega_{r_0}$. Hence we have 
\begin{equation}\label{cal006}
    0\leq h_\nu\leq u^\varepsilon_\nu\leq\underline{u}_\nu\leq C\quad{\rm on\;}\partial\Omega.
\end{equation}
Furthermore, note that when $k=n$ and $\varphi=0$, by Hopf lemma we have
\begin{equation}\label{cal006'}
u_\nu^\varepsilon\geq h_\nu=c_1>0,
\end{equation}
which is critical in the second order boundary estimates.

\subsection{Global gradient estimates, $k<n$}
For brevity, write $u^\varepsilon=u$. The maximum principle tells us that 
\begin{equation}
    \max_{\overline{\Omega}_\varepsilon}u^\varepsilon\leq\max_{\partial\Omega_\varepsilon}\underline{u}\leq\max_{\partial\Omega}\varphi\leq C,
\end{equation}
which is a fixed constant independent of $\varepsilon$. WLOG we assume $u\leq C<0$ in $\overline{\Omega}_\varepsilon$ (else use $u-C$ instead, since interior estimates do not involve $\varphi$). Consider the auxiliary function 
\begin{equation}\label{generalP}
    P(z)=e^{\phi(u)+\psi(|z|^2-\varepsilon^2)}|\nabla u|^2.
\end{equation}
First we show $P$ is bounded in $\Omega_\varepsilon$. Indeed, suppose $P$ attains its maximum at $z_0\in\Omega_\varepsilon$. WLOG, we assume $u_{i\bar{j}}(z_0)$ is diagonal and so is $F^{i\bar{j}}(z_0)$, namely $u_{i\bar{i}}=\lambda_i$ and $F^{i\bar{i}}=f_i>0$ at $z_0$. The derivative tests tell us at $z_0$,
\begin{equation}\label{gradienttestofP}
    0=\partial_i\log P=\phi'u_i+\psi'\bar{z}_i+\frac{\partial_i|\nabla u|^2}{|\nabla u|^2},\quad0=\partial_{\bar{i}}\log P=\phi'u_{\bar{i}}+\psi'z_i+\frac{\partial_{\bar{i}}|\nabla u|^2}{|\nabla u|^2},
\end{equation}
and, using \eqref{gradienttestofP}, \eqref{linearizedsum1}, \eqref{linearizedderivative1},
\begin{equation}\label{2ndderivativetestofP}
\begin{aligned}
        0&\geq F^{i\bar{i}}\partial_{i\bar{i}}\log P\\
        &=\phi' F^{i\bar{i}}u_{i\bar{i}}+\phi''F^{i\bar{i}}|u_i|^2+\psi'\mathcal{F}+\psi''F^{i\bar{i}}|z_i|^2+\frac{F^{i\bar{i}}\partial_{i\bar{i}}|\nabla u|^2}{|\nabla u|^2}-F^{i\bar{i}}|\phi'u_i+\psi'\bar{z}_i|^2\\
        &\geq\phi'\varepsilon^{\frac{1}{k}}+\psi'\mathcal{F}+(\phi''-(1+\delta)|\phi'|^2)F^{i\bar{i}}|u_i|^2+(\psi''-(1+\frac{1}{\delta})|\psi'|^2)F^{i\bar{i}}|z_i|^2+\frac{F^{i\bar{i}}U_i^2+F^{i\bar{i}}\lambda_i^2}{|\nabla u|^2},
\end{aligned}
\end{equation}
where we denote $U_i=(\sum_l|u_{il}|^2)^{\frac{1}{2}}>0$. Rewriting \eqref{gradienttestofP} as
\begin{equation}
    \lambda_iu_i+u_{il}u_{\bar{l}}=-|\nabla u|^2(\phi'u_i+\psi'\bar{z}_i)
\end{equation}
and using Cauchy-Schwarz inequality, we have
\begin{equation}
\begin{aligned}
    (\lambda_i^2+U_i^2)(|u_i|^2+|\nabla u|^2)&\geq(|\lambda_iu_i|+U_i|\nabla u|)^2\geq(|\lambda_iu_i|+|u_{il}u_{\bar{l}}|)^2\\
    &\geq|\nabla u|^4|\phi'u_i+\psi'\bar{z}_i|^2\geq|\nabla u|^4((1-\delta)|\phi'|^2|u_i|^2-\frac{1-\delta}{\delta}|\psi'|^2|z_i|^2)
\end{aligned}
\end{equation}
for any $0<\delta<1$. Thus
\begin{equation}
    \frac{F^{i\bar{i}}U_i^2+F^{i\bar{i}}\lambda_i^2}{|\nabla u|^2}\geq\frac{1-\delta}{2}|\phi'|^2F^{i\bar{i}}|u_i|^2-\frac{1-\delta}{\delta}|\psi'|^2F^{i\bar{i}}|z_i|^2,
\end{equation}
and \eqref{2ndderivativetestofP} is reduced to
\begin{equation}
    0\geq\phi'\varepsilon^{\frac{1}{k}}+\psi'\mathcal{F}+(\phi''-\frac{1+3\delta}{2}|\phi'|^2)F^{i\bar{i}}|u_i|^2+(\psi''-\frac{2}{\delta}|\psi'|^2)F^{i\bar{i}}|z_i|^2.
\end{equation}
Take $\phi(u)=-a\log(-u),\psi(|z|^2-\varepsilon^2)=2\log(|z|^2-\varepsilon^2)$ for $a=2-\frac{2}{\gamma}\in(0,2)$. Since $\phi',\psi'>0$, we have
\begin{equation}\label{cal100}
    (a-\frac{1+3\delta}{2}a^2)\frac{F^{i\bar{i}}|u_i|^2}{u^2}\leq(2+\frac{8}{\delta})\frac{F^{i\bar{i}}|z_i|^2}{(|z|^2-\varepsilon^2)^2}\leq(2+\frac{8}{\delta})\mathcal{F}\frac{|z_0|^2}{(|z_0|^2-\varepsilon^2)^2}.
\end{equation}
Note that when $k<n$, we always have \eqref{uniformlyelliptic}, namely $F^{i\bar{i}}\geq c_0\mathcal{F}$ for all $1\leq i\leq n$. It follows that
\begin{equation}
    |\nabla u(z_0)|^2\leq C_{n,k,\delta}\frac{|z_0|^2}{(|z_0|^2-\varepsilon^2)^2}|u(z_0)|^2,
\end{equation}
provided that $0<\delta<\frac{1}{3(\gamma-1)}$ is sufficiently small so that the coefficient on the LHS of \eqref{cal100} is positive. Therefore, for any $z\in\Omega_\varepsilon$, we have
\begin{equation}\label{cal101}
\begin{aligned}
    \log P(z)&\leq\log P(z_0)\leq-a\log|u(z_0)|+2\log(|z_0|^2-\varepsilon^2)+\log|\nabla u(z_0)|^2\\
    &\leq C+(2-a)\log|u(z_0)|+\log|z_0|^2\leq C,
\end{aligned}
\end{equation}
the last inequality resulting from the $C^0$ estimate \eqref{C0estimate}. Clearly $P(z)=(-u)^{-a}(|z|^2-\varepsilon^2)^2|\nabla u|^2=0$ on $\partial B_\varepsilon$, and $P(z)$ is bounded on $\partial\Omega$ because of \eqref{cal006}. Combining these with \eqref{cal101} we prove \eqref{C1estimate}.

\subsection{Global gradient estimates, $k=n$}
Here we use the classical argument first proposed in \cite{guan2000correction}. (See also \cite{gao2023exterior} for similar estimates for complex Hessian equations.) Write $u^\varepsilon=u\leq0$ and set
\begin{equation}\label{P'}
    P(z)=(-u)^{-\beta}|\nabla u|^2,
\end{equation}where $\beta=2+\frac{2}{\gamma}=2+\frac{1}{n-2}>2$. We must show $P\leq C$ on $\overline{\Omega}_\varepsilon$. If $P$ attains an maximum at an interior point $z_0\in\Omega_\varepsilon$, then likewise we have at $z_0$, after diagonalization
\begin{equation}\label{gradienttestofP'}
    0=\partial_i\log P=-\beta\frac{u_i}{u}+\frac{\partial_i|\nabla u|^2}{|\nabla u|^2},\quad0=\partial_{\bar{i}}\log P=-\beta\frac{u_{\bar{i}}}{u}+\frac{\partial_{\bar{i}}|\nabla u|^2}{|\nabla u|^2},
\end{equation}
and
\begin{equation}\label{2ndderivativetestofP'}
\begin{aligned}
        0&\geq F^{i\bar{i}}\partial_{i\bar{i}}\log P=-\beta F^{i\bar{i}}\left(\frac{u_{i\bar{i}}}{u}-\frac{|u_i|^2}{u^2}\right)+F^{i\bar{i}}\left(\frac{\partial_{i\bar{i}}|\nabla u|^2}{|\nabla u|^2}-\frac{\partial_i|\nabla u|^2\partial_{\bar{i}}|\nabla u|^2}{|\nabla u|^4}\right)\\
        &=-\frac{\beta n}{u}+\beta\frac{F^{i\bar{i}}|u_i|^2}{u^2}+\frac{F^{i\bar{i}}|u_{il}|^2+F^{i\bar{i}}u_{i\bar{i}}^2}{|\nabla u|^2}-F^{i\bar{i}}\frac{|u_{il}u_{\bar{l}}|^2+2u_{i\bar{i}}{\rm Re}(u_{il}u_{\bar{i}}u_{\bar{l}})+u_{i\bar{i}}^2|u_i|^2}{|\nabla u|^4}.
\end{aligned}
\end{equation}
Note that by \eqref{gradienttestofP'} we actually have
\begin{equation}\label{cal102}
    \sum_{l=1}^nu_{il}u_{\bar{i}}u_{\bar{l}}=u_{\bar{i}}(\partial_i|\nabla u|^2-u_iu_{i\bar{i}})=\left(\frac{\beta|\nabla u|^2}{u}-u_{i\bar{i}}\right)|u_i|^2\in\mathbb{R},
\end{equation}
and substituting \eqref{gradienttestofP'} into \eqref{2ndderivativetestofP'} yields
\begin{equation}\label{cal105}
\begin{aligned}
    0&\geq|\nabla u|^2(F^{i\bar{i}}U_i^2+F^{i\bar{i}}\lambda_i^2)-(1-\frac{1}{\beta})(F^{i\bar{i}}|\sum_lu_{il}u_{\bar{l}}|^2+2F^{i\bar{i}}\lambda_i\sum_lu_{il}u_{\bar{i}}u_{\bar{l}}+F^{i\bar{i}}\lambda_i^2|u_i|^2)\\
    &=\sum_i\left((\sum_jF^{j\bar{j}}\lambda_j^2)|u_i|^2+|\nabla u|^2F^{i\bar{i}}U_i^2-(1-\frac{1}{\beta})F^{i\bar{i}}(|\sum_lu_{il}u_{\bar{l}}|^2+2\lambda_i\sum_lu_{il}u_{\bar{i}}u_{\bar{l}}+\lambda_i^2|u_i|^2)\right),
\end{aligned}
\end{equation}
where $U_i=(\sum_l|u_{il}|^2)^{\frac{1}{2}}$.

Let $\Lambda^+=\{1\leq i\leq n:\lambda_i\geq0\}$, $\Lambda^-=\{1\leq i\leq n:\lambda_i<0\}$.
For $i\in\Lambda^+,\lambda_i\geq0$, by \eqref{cal102} we have $\lambda_i\sum_lu_{il}u_{\bar{i}}u_{\bar{l}}\leq0$. In this case, by Cauchy-Schwarz inequality, for each fixed $i$ we have
\begin{gather}
     \label{cal103}|\nabla u|^2U_i^2=\left(\sum_l|u_l|^2\right)\left(\sum_l|u_{il}|^2\right)\geq|\sum_lu_{il}u_{\bar{l}}|^2,\\
     (\sum_jF^{j\bar{j}}\lambda_j^2)|u_i|^2\geq F^{i\bar{i}}\lambda_i^2|u_i|^2.
\end{gather}
This shows that
\begin{equation}\label{cal104}
   \sum_{i\in\Lambda^+}\left((\sum_jF^{j\bar{j}}\lambda_j^2)|u_i|^2+|\nabla u|^2F^{i\bar{i}}U_i^2-F^{i\bar{i}}(|\sum_lu_{il}u_{\bar{l}}|^2+2\lambda_i\sum_lu_{il}u_{\bar{i}}u_{\bar{l}}+\lambda_i^2|u_i|^2)\right)\geq0.
\end{equation}
So it suffices to consider the summation when $i\in\Lambda^-$. Plugging in \eqref{cal104} into \eqref{cal105} and using \eqref{cal103}, we get
\begin{equation}
\begin{aligned}
        0&\geq\sum_{i\in\Lambda^-}\left((\sum_jF^{j\bar{j}}\lambda_j^2)|u_i|^2+|\nabla u|^2F^{i\bar{i}}U_i^2-(1-\frac{1}{\beta})F^{i\bar{i}}(|\sum_lu_{il}u_{\bar{l}}|^2+2\lambda_i\sum_lu_{il}u_{\bar{i}}u_{\bar{l}}+\lambda_i^2|u_i|^2)\right)\\
        &\geq\sum_{i\in\Lambda^-}\left((\sum_jF^{j\bar{j}}\lambda_j^2)|u_i|^2+F^{i\bar{i}}(\frac{1}{\beta}|\sum_lu_{il}u_{\bar{l}}|^2-2(1-\frac{1}{\beta})\lambda_i\sum_lu_{il}u_{\bar{i}}u_{\bar{l}}-(1-\frac{1}{\beta})\lambda_i^2|u_i|^2)\right)\\
        &\geq\sum_{i\in\Lambda^-}(\sum_jF^{j\bar{j}}\lambda_j^2)|u_i|^2-(\beta-1)F^{i\bar{i}}\lambda_i^2|u_i|^2\\
        &=\sum_{i\in\Lambda^-}(\sum_{j\neq i}F^{j\bar{j}}\lambda_j^2-\frac{1}{n-2}F^{i\bar{i}}\lambda_i^2)|u_i|^2,
\end{aligned}
\end{equation}
where in the last inequality we use
\begin{equation}
    \frac{1}{\beta}|\sum_lu_{il}u_{\bar{l}}|^2+\frac{(\beta-1)^2}{\beta}\lambda_i^2|u_i|^2\geq2(1-\frac{1}{\beta})|\lambda_i\sum_lu_{il}u_{\bar{i}}u_{\bar{l}}|
\end{equation}
for each fixed $i$. WLOG $\lambda_1\geq\cdots\geq\lambda_n$ so that $\lambda_n<0$ and $F^{i\bar{i}}\lambda_i^2\leq F^{n\bar{n}}\lambda_n^2$ for any $i\in\Lambda^-$. Now it suffices to show that
\begin{equation}
    \sum_{j\neq n}F^{j\bar{j}}\lambda_j^2\geq\frac{1}{n-2}F^{n\bar{n}}\lambda_n^2.
\end{equation}
Note that by Cauchy-Schwarz inequality and \eqref{linearizedsum1'}, we have
\begin{equation}\label{cal106}
    (\sum_{j\neq n}F^{j\bar{j}})(\sum_{j\neq n}F^{j\bar{j}}\lambda_j^2)\geq(\sum_{j\neq n}F^{j\bar{j}}\lambda_j)^2=(n-F^{n\bar{n}}\lambda_n)^2.
\end{equation}
Using \eqref{linearizedoperator'} we have
\begin{equation}\label{cal107}
    \sum_{j\neq n}F^{j\bar{j}}=(n-2)S_{-1}(\mu)+\frac{1}{\mu_n}=(n-2)F^{n\bar{n}}+\frac{n-1}{\mu_n}<(n-2)F^{n\bar{n}}+\frac{n-1}{|\lambda_n|}
\end{equation}
since $\mu_n=\sigma_1(\lambda)-\lambda_n>-\lambda_n$. Combining \eqref{cal106} and \eqref{cal107}, we only need
\begin{equation}
    (n-F^{n\bar{n}}\lambda_n)^2\geq((n-2)F^{n\bar{n}}-\frac{n-1}{\lambda_n})(\frac{1}{n-2}F^{n\bar{n}}\lambda_n^2)
\end{equation}
which is equivalent to
\begin{equation}
    n^2\geq(2n-\frac{n-1}{n-2})F^{n\bar{n}}\lambda_n.
\end{equation}
This is clearly true for $n\geq3$ since $\lambda_n<0$. 

Combining the interior estimates and \eqref{cal006}, we still need to bound $P(z)$ on $\partial B_\varepsilon$, namely showing that for $|z|=\varepsilon$, 
    \begin{equation}
        |\nabla u^\varepsilon(z)|\leq C\varepsilon^{-\gamma-1}
    \end{equation}
for some constant $C>0$ independent of $\varepsilon$.
Indeed, let $\tilde{h}\in C^\infty(D\setminus B_1)$ solve the Dirichlet problem
\begin{equation}
    \begin{cases}
        \Delta\tilde{h}=0\quad{\rm in}\;D\setminus B_1,\\
        \tilde{h}=\tilde{u}=\tilde{\underline{u}}\quad {\rm{on}}\;\partial(D\setminus B_1).
    \end{cases}
\end{equation}Thus $S_1(i\partial\bar{\partial}\tilde{u})>0=S_1(i\partial\bar{\partial}\tilde{h})$ in $D\setminus B_1$. The maximum principle then asserts that $\tilde{\underline{u}}\leq\tilde{u}\leq\tilde{h}$ in $D\setminus B_1$. Therefore, $-C\leq\tilde{h}_\nu\leq\tilde{u}_\nu\leq\tilde{\underline{u}}_\nu\leq C$ on $\partial B_1$, and it follows from \eqref{cal007} that $|u^\varepsilon_\nu|\leq C\varepsilon^{-\gamma-1}$ on $\partial B_\varepsilon$.

\section{Second order estimates}

\subsection{Estimates on $\partial\Omega$}

We claim that there exists a constant $C>0$ independent of $\varepsilon$ such that on $\partial\Omega$,
\begin{equation}\label{C2boundary1}
    |u^\varepsilon_{i\bar{j}}(z)|\leq C.
\end{equation}

For brevity, write $u^\varepsilon=u$. Fix any $z_0\in\partial\Omega$. Up to a translation and rotation, we assume $z_0=0$ and that the positive $x_n$ axis is its interior normal. 
Write $z=(t',x_n)\in\mathbb{R}^{2n}$, where $t'=(t_1,\cdots,t_{2n-1}), t_{2i-1}=y_i,t_{2i}=x_i,1\leq i\leq n-1,t_{2n-1}=y_n$. Locally near 0, $\partial\Omega$ is represented by
\begin{equation}
    x_n=\rho(t')=\sum_{\alpha,\beta=1}^{2n-1}\rho_{\alpha\beta}(0)t_\alpha t_\beta+O(|t'|^3).
\end{equation}

\subsubsection{Tangential derivatives}
Since $u=\underline{u}=\varphi$ on $\partial\Omega$, we have
\begin{equation}\label{cal141}
    |u_{t_\alpha t_\beta}(0)|\leq|\varphi_{t_\alpha t_\beta}(0)|+|\rho_{\alpha\beta}(0)(u-\varphi)_{x_n}(0)|\leq C.
\end{equation}

\subsubsection{Tangential-normal derivatives}
Using barrier functions as in \cite{Caffarelli1985-2,li2004dirichlet,guan2023dirichlet}, we can show that in a small neighbourhood $\Omega\cap B_\delta(0)$ of 0,
\begin{equation}\label{cal131}
    L\left(\pm T_\alpha(u-\underline{u})+(u_{y_n}-\underline{u}_{y_n})^2+\sum_{l<n}|u_l-\underline{u}_l|^2\right)\geq-C\left(1+\mathcal{F}\right),
\end{equation}
where $L=F^{i\bar{j}}\partial_{i\bar{j}}$ is the linearization of $F$ at $u$ and $T_\alpha=\partial_{t_\alpha}+\rho_{t_\alpha}\partial_{x_n}$ for any $1\leq\alpha\leq 2n-1$. Indeed,  
\begin{equation}
    L(T_\alpha u)=T_\alpha(S_k^{\frac{1}{k}}(\mu[u]))+F^{i\bar{j}}\rho_{it_\alpha}u_{x_n\bar{j}}+F^{i\bar{j}}\rho_{\bar{j}t_\alpha}u_{x_ni}+F^{i\bar{j}}\rho_{i\bar{j}t_\alpha}u_{x_n}.
\end{equation}The first term vanishes and the last term is bounded by $|F^{i\bar{j}}\rho_{i\bar{j}t_\alpha}u_{x_n}|\leq C\mathcal{F}$. For the two terms in the middle, note that  
\begin{equation}
|F^{i\bar{j}}\rho_{it_\alpha}u_{x_n\bar{j}}|\leq|2F^{i\bar{j}}\rho_{it_\alpha}u_{n\bar{j}}|+|iF^{i\bar{j}}\rho_{it_\alpha}u_{y_n\bar{j}}|
\leq C\sum f_i|\lambda_i|+C\mathcal{F}^{\frac{1}{2}}|F^{i\bar{j}}u_{y_ni}u_{y_n\bar{j}}|^{\frac{1}{2}}, 
\end{equation} 
as a consequence of Cauchy-Schwarz inequality. Combined with $|L(T_\alpha\underline{u})|\leq C\mathcal{F}$, we obtain
\begin{equation}
    |LT_\alpha(u-\underline{u})|\leq C\left(\sum f_i|\lambda_i|+\mathcal{F}^\frac{1}{2}|F^{i\bar{j}}u_{y_ni}u_{y_n\bar{j}}|^{\frac{1}{2}}+\mathcal{F}\right).
\end{equation}
To cancel out the first two terms, note that
\begin{equation}
\begin{aligned}
     L\left(\sum_{l<n}|u_l-\underline{u}_{l}|^2\right)=&
     \sum_{l<n}F^{i\bar{j}}((u_{l\bar{j}}-\underline{u}_{l\bar{j}})(u_{i\bar{l}}-\underline{u}_{i\bar{l}})+(u_{il}-\underline{u}_{il})(u_{\bar{l}\bar{j}}-\underline{u}_{\bar{l}\bar{j}}))\\
     &+\sum_{l<n}F^{i\bar{j}}((u_{li\bar{j}}-\underline{u}_{li\bar{j}})(u_{\bar{l}}-\underline{u}_{\bar{l}})+(u_{\bar{l}i\bar{j}}-\underline{u}_{\bar{l}i\bar{j}})(u_l-\underline{u}_l))\\
     :=&I_1+I_2,
\end{aligned}
\end{equation}where 
\begin{gather}
I_1\geq\sum_{l<n}F^{i\bar{j}}u_{i\bar{l}}u_{l\bar{j}}-2\left(\sum_{l<n}F^{i\bar{j}}u_{i\bar{l}}u_{l\bar{j}}\right)^{\frac{1}{2}}\left(\sum_{l<n}F^{i\bar{j}}\underline{u}_{i\bar{l}}\underline{u}_{l\bar{j}}\right)^{\frac{1}{2}}\geq\frac{1}{2}\sum_{l<n}F^{i\bar{j}}u_{i\bar{l}}u_{l\bar{j}}-C\mathcal{F},\\
    I_2=-\sum_{l<n}F^{i\bar{j}}\underline{u}_{li\bar{j}}(u_{\bar{l}}-\underline{u}_{\bar{l}})+F^{i\bar{j}}\underline{u}_{\bar{l}i\bar{j}}(u_l-\underline{u}_l)\geq-C\mathcal{F}.
\end{gather}
Likewise we derive
\begin{equation}
    L(u_{y_n}-\underline{u}_{y_n})^2\geq F^{i\bar{j}}u_{y_ni}u_{y_nj}-C\mathcal{F}.
\end{equation}
Hence 
\begin{equation}
\begin{aligned}
    L\left((u_{y_n}-\underline{u}_{y_n})^2+\sum_{l<n}|u_l-\underline{u}_l|^2\right)
    \geq\frac{1}{2}\sum_{l<n}F^{i\bar{j}}u_{i\bar{l}}u_{l\bar{j}}+F^{i\bar{j}}u_{y_ni}u_{y_nj}-C\mathcal{F}.
\end{aligned}
\end{equation}
Therefore, we have
\begin{equation}\label{cal132}
\begin{aligned}
    &L\left(\pm T_\alpha(u-\underline{u})+(u_{y_n}-\underline{u}_{y_n})^2+\sum_{l<n}|u_l-\underline{u}_l|^2\right)\\
    \geq&\frac{1}{2}\sum_{l<n}F^{i\bar{j}}u_{i\bar{l}}u_{l\bar{j}}+F^{i\bar{j}}u_{y_ni}u_{y_nj}-C\left(\sum f_i|\lambda_i|+\mathcal{F}^\frac{1}{2}|F^{i\bar{j}}u_{y_ni}u_{y_n\bar{j}}|^{\frac{1}{2}}+\mathcal{F}\right)\\
    \geq&\frac{1}{2}\sum_{l<n}F^{i\bar{j}}u_{i\bar{l}}u_{l\bar{j}}-C\left(\sum f_i|\lambda_i|+\mathcal{F}\right).
\end{aligned}
\end{equation}
Using Proposition 2.6 and Corollary 2.8 in \cite{Guan2014}, for any $\sigma>0$ we have
\begin{equation}
    \sum f_i|\lambda_i|\leq \sigma\sum_{i\neq r} f_i|\lambda_i|^2+\frac{C}{\sigma}\sum f_i+C\leq\frac{\sigma}{c_0}\sum_{l<n}F^{i\bar{j}}u_{i\bar{l}}u_{l\bar{j}}+\frac{C}{\sigma}\mathcal{F}+C
\end{equation}for some index $1\leq r\leq n$ and constant $c_0>0$. Thus choosing $\sigma=\frac{c_0}{2C}$, we obtain \eqref{cal131} from \eqref{cal132}. Set $\psi=u-\underline{u}+td-\frac{N}{2}d^2$ in $\Omega\cap B_\delta(0)$, where $d$ is the distance to $\partial\Omega$ and $t,\delta\ll1,N\gg1$ are fixed constants. By Lemma 6.2 in \cite{Guan1999CVPDE}, it is known that 
\begin{equation}
    \begin{cases}
        L\psi\leq-\varepsilon_0\left(1+\mathcal{F}\right)\quad{\rm{in}}\;\Omega\cap B_\delta(0),\\
        \psi\geq0\quad{\rm{on}}\;\partial(\Omega\cap B_\delta(0)).
    \end{cases}
\end{equation}
Therefore, for $B\gg A\gg1$, we have $Lw\geq0$, where
\begin{equation}
    w=\pm T_\alpha(u-\underline{u})+(u_{y_n}-\underline{u}_{y_n})^2+\sum_{l<n}|u_l-\underline{u}_l|^2-A|z|^2-B\psi.
\end{equation}
A careful examination of $w$ on the boundary implies $w\leq0$ on $\partial(\Omega\cap B_\delta(0))$, provided that we take $A>0$ sufficiently large. Now by the maximum principle, we know $w\leq0$ in $\Omega\cap B_\delta(0)$. It follows that $\tilde{w}:=\pm T_\alpha(u-\underline{u})-A|z|^2-B\psi\leq0$ in $\Omega\cap B_\delta(0)$ with $\tilde{w}(0)=0$. This implies that $\tilde{w}_{x_n}(0)=-\tilde{w}_\nu(0)\leq0$. Therefore, we obtain
\begin{equation}
    |u_{t_\alpha x_n}(0)|=|(T_\alpha u)_{x_n}(0)|\leq|(T_\alpha\underline{u})_{x_n}(0)|+B\psi_{x_n}(0)+\tilde{w}_{x_n}(0)\leq C.
\end{equation}

\subsubsection{Normal derivatives}
To bound $|u_{x_nx_n}(0)|$ we only need to estimate $|u_{n\bar{n}}(0)|$. Since $\lambda(i\partial\bar{\partial}u)\in\Gamma\subset\Gamma_1$, clearly 
\begin{equation}
    u_{n\bar{n}}(0)\geq-\sum_{i<n}u_{i\bar{i}}(0)\geq-C.
\end{equation}
To derive an upper bound we solve equation \eqref{approxdirichletproblem} for $u_{n\bar{n}}(0)$. WLOG $\{u_{l\bar{m}}(0)\}_{1\leq l,m\leq n-1}$ is diagonal and $|u_{l\bar{l}}(0)|,|u_{l\bar{n}}(0)|\leq C$ for all $1\leq l\leq n-1$. Suppose $u_{n\bar{n}}(0)=N$ is very large. Using Lemma 1.2 in \cite{caffarelli1985dirichlet}, we know the eigenvalues of $i\partial\bar{\partial}u$ at 0 are
\begin{equation}
    \lambda_l=u_{l\bar{l}}(0)+o(1),\quad\lambda_n=u_{n\bar{n}}(0)+O(1)=N+O(1),\quad1\leq l\leq n-1.
\end{equation}
Thus 
\begin{equation}
    \mu_l=N+O(1),\quad\mu_n=\sum_{l<n}u_{l\bar{l}}(0)+o(1)=O(1),\quad1\leq l\leq n-1.
\end{equation}
For $k<n$, it follows that
\begin{equation}
    \varepsilon=S_k(\mu[u])=\mu_nS_{k-1}(\mu_1,\cdots,\mu_{n-1})+S_k(\mu_1,\cdots,\mu_{n-1})=N^k+O(N^{k-1}),
\end{equation}
which means that $u_{n\bar{n}}(0)=N\leq C$. For $k=n$, we have instead
\begin{equation}
    \varepsilon=\mu_1\cdots\mu_{n-1}\mu_n=(N^{n-1}+O(N^{n-2}))\mu_n.
\end{equation}
To bound $N$ from above we need a positive lower bound for $\mu_n$, or equivalently, $\sum_{l<n}u_{l\bar{l}}(0)$.
Indeed, since $u=0$ on $\partial\Omega$, we have
\begin{equation}
    u_{t_\alpha t_\beta}(0)=-\rho_{\alpha\beta}(0)u_{x_n}(0).
\end{equation}
A local defining function of $\partial\Omega$ near 0 is given by
\begin{equation}
    \sigma(z)=-x_n+{\rm Re}(\sigma_{ij}(0)z_iz_j)+\sigma_{i\bar{j}}(0)z_i\bar{z}_j+O(|z|^3).
\end{equation}
Locally we have $\sigma(t',\rho(t'))=0$. Hence for $1\leq l,m\leq n-1$,
\begin{equation}
    \sigma_{l\bar{m}}(0)=-\rho_{l\bar{m}}(0)\sigma_{x_n}(0)=\frac{\sigma_{x_n}(0)}{u_{x_n}(0)}u_{l\bar{m}}(0)=-\frac{1}{u_{x_n}(0)}u_{l\bar{m}}(0).
\end{equation}
Now by \eqref{cal006'}, we have $-u_{x_n}(0)=u_\nu(0)\geq c_1>0$. Since $\Omega$ is assumed to be pseudo mean-convex as in Definition \ref{pseudoconvexity}, we assert that
\begin{equation}
    \sum_{l<n}u_{l\bar{l}}(0)={\rm tr}u_{l\bar{m}}(0)=-u_{x_n}(0){\rm tr}\sigma_{l\bar{m}}(0)\geq c_0c_1>0.
\end{equation}

\subsection{Global second order estimates, $k<n$}

We follow the idea in \cite{Chou2001AVT} to prove \eqref{C2estimate} using \eqref{C1estimate} and \eqref{C2boundary1}. For brevity, we write $u$ instead of $u^\varepsilon$ and $u<0$ throughout the proof. Define $P$ as in \eqref{generalP}, but of the more specific form
    \begin{equation}
        P(z)=(-u)^{-a}(|z|^2-\varepsilon^2)^b|\nabla u|^2,
    \end{equation}
    where $a,b\in\mathbb{R}$ are nonnegative constants to be chosen. In both cases we will need the following lemma:
    \begin{lemma}\label{lemma001}
        For any $z\in\Omega_\varepsilon$, we have
        \begin{equation}
            F^{i\bar{j}}\partial_{i\bar{j}}P(z)\geq P\left(\frac{1}{2|\nabla u|^2}F^{i\bar{j}}u_{i\bar{l}}u_{l\bar{j}}-\frac{a(4a-1)}{u^2}F^{i\bar{j}}u_iu_{\bar{j}}-\frac{b(4b+1)}{(|z|^2-\varepsilon^2)^2}F^{i\bar{j}}\bar{z}_iz_j\right).
        \end{equation}
    \end{lemma}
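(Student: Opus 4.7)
The plan is to compute $F^{i\bar{j}}\partial_{i\bar{j}}P$ via logarithmic differentiation,
\[
F^{i\bar{j}}\partial_{i\bar{j}}P = P\bigl(F^{i\bar{j}}\partial_{i\bar{j}}\log P + F^{i\bar{j}}\partial_i\log P\,\partial_{\bar{j}}\log P\bigr),
\]
with $\log P = -a\log(-u) + b\log(|z|^2-\varepsilon^2) + \log|\nabla u|^2$, and show the resulting expression dominates the stated lower bound after discarding non-negative quantities. The expansion of $F^{i\bar{j}}\partial_{i\bar{j}}\log P$ produces, besides the curvature-type piece $F^{i\bar{j}}\partial_{i\bar{j}}|\nabla u|^2/|\nabla u|^2$ and the linear contributions $a F^{i\bar{j}}u_iu_{\bar{j}}/u^2$ and $-bF^{i\bar{j}}\bar{z}_iz_j/(|z|^2-\varepsilon^2)^2$, the non-negative and discardable terms $-aF^{i\bar{j}}u_{i\bar{j}}/u = -a\varepsilon^{1/k}/u$ and $b\mathcal{F}/(|z|^2-\varepsilon^2)$, together with a problematic $-F^{i\bar{j}}\partial_i|\nabla u|^2\partial_{\bar{j}}|\nabla u|^2/|\nabla u|^4$ coming from $\log|\nabla u|^2$.

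The decisive structural step is to split $\partial_i\log P = G_i + H_i$, where $G_i = -au_i/u + b\bar{z}_i/(|z|^2-\varepsilon^2)$ is the ``potential'' piece and $H_i = \partial_i|\nabla u|^2/|\nabla u|^2$ is the ``Hessian'' piece. Expanding
\[
F^{i\bar{j}}\partial_i\log P\,\partial_{\bar{j}}\log P = F^{i\bar{j}}G_iG_{\bar{j}} + F^{i\bar{j}}H_iH_{\bar{j}} + 2{\rm Re}(F^{i\bar{j}}G_iH_{\bar{j}}),
\]
the $F^{i\bar{j}}H_iH_{\bar{j}}$ piece cancels the problematic term exactly -- this is the key identity driving the proof. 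Diagonalizing at the point so that $u_{i\bar{j}} = \lambda_i\delta_{ij}$ and $F^{i\bar{j}} = f_i\delta_{ij}$, the surviving second-order contribution decomposes as $F^{i\bar{j}}\partial_{i\bar{j}}|\nabla u|^2 = F^{i\bar{j}}u_{il}u_{\bar{l}\bar{j}} + F^{i\bar{j}}u_{i\bar{l}}u_{l\bar{j}}$; we keep $F^{i\bar{j}}u_{i\bar{l}}u_{l\bar{j}}$ (the target quantity) and discard the non-negative $F^{i\bar{j}}u_{il}u_{\bar{l}\bar{j}}$. What remains to estimate is $F^{i\bar{j}}G_iG_{\bar{j}} + 2{\rm Re}(F^{i\bar{j}}G_iH_{\bar{j}})$ from below.

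The main obstacle, and the reason the proof requires genuine calibration rather than routine AM--GM, is to land exactly on the coefficients $4a-1$ and $4b+1$. First, applying Cauchy--Schwarz and AM--GM to the $G$--$H$ cross with parameter $\delta>0$ gives
\[
|2{\rm Re}(F^{i\bar{j}}G_iH_{\bar{j}})| \leq \delta F^{i\bar{j}}G_iG_{\bar{j}} + \tfrac{1}{\delta}F^{i\bar{j}}H_iH_{\bar{j}}.
\]
Second, an \emph{asymmetric} AM--GM on $\partial_i|\nabla u|^2 = \sum_l u_{il}u_{\bar{l}} + \lambda_iu_i$ with parameter $\mu>0$,
\[
|\partial_i|\nabla u|^2|^2 \leq (1+\tfrac{1}{\mu})\bigl|\textstyle\sum_l u_{il}u_{\bar{l}}\bigr|^2 + (1+\mu)\lambda_i^2|u_i|^2,
\]
yields $F^{i\bar{j}}H_iH_{\bar{j}} \leq (1+\tfrac{1}{\mu})F^{i\bar{j}}u_{il}u_{\bar{l}\bar{j}}/|\nabla u|^2 + (1+\mu)F^{i\bar{j}}u_{i\bar{l}}u_{l\bar{j}}/|\nabla u|^2$. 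Demanding that the coefficient of $F^{i\bar{j}}u_{il}u_{\bar{l}\bar{j}}$ in the final bound be non-negative (so it can be dropped), that of $F^{i\bar{j}}u_{i\bar{l}}u_{l\bar{j}}$ be at least $\tfrac{1}{2}$, and $|1-\delta|$ be as small as possible, forces the unique choice $\mu = \tfrac{1}{2}$, $\delta = 3$. With $(1-\delta) = -2$, and bounding $F^{i\bar{j}}G_iG_{\bar{j}} \leq 2\bigl(\tfrac{a^2}{u^2}F^{i\bar{j}}u_iu_{\bar{j}} + \tfrac{b^2}{(|z|^2-\varepsilon^2)^2}F^{i\bar{j}}\bar{z}_iz_j\bigr)$ via Cauchy--Schwarz on the $ab$-cross inside $G$, one obtains a penalty of exactly $-4$ on each of the two weights; combining with the linear pieces $+a$ and $-b$ produces the advertised coefficients $-a(4a-1)$ and $-b(4b+1)$.
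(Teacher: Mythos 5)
Your proof is correct and follows essentially the same route as the paper's: logarithmic differentiation, the exact cancellation of $F^{i\bar j}\partial_i|\nabla u|^2\partial_{\bar j}|\nabla u|^2/|\nabla u|^4$ against the $F^{i\bar j}H_iH_{\bar j}$ square, discarding $-aF^{i\bar j}u_{i\bar j}/u\geq0$ and $b\mathcal{F}/(|z|^2-\varepsilon^2)\geq0$, then a calibrated Cauchy--Schwarz on the $G$--$H$ cross leading precisely to the $-2F^{i\bar j}G_iG_{\bar j}$ loss and hence to the coefficients $a(4a-1)$ and $b(4b+1)$. The only difference is organizational: the paper runs two Cauchy--Schwarz inequalities separately on the $u_{il}u_{\bar l}$ and $u_{i\bar l}u_l$ pieces of $H_i$ with fixed weights $1$ and $2$, while you parametrize the same split with $(\delta,\mu)$ and then observe that the constraints force $\delta=3$, $\mu=\tfrac12$, which nicely explains where the paper's constants come from.
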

    \begin{proof}
        After some straightforward calculations, we find
        \begin{equation}\label{cal111}
        \begin{aligned}
            P^{-1}F^{i\bar{j}}P_{i\bar{j}}=&\frac{F^{i\bar{j}}\partial_{i\bar{j}}|\nabla u|^2}{|\nabla u|^2}-\frac{F^{i\bar{j}}\partial_i|\nabla u|^2\partial_{\bar{j}}|\nabla u|^2}{|\nabla u|^4}\\
            &-a\frac{F^{i\bar{j}}u_{i\bar{j}}}{u}+a\frac{F^{i\bar{j}}u_iu_{\bar{j}}}{u^2}+b\frac{F^{i\bar{j}}\delta_{ij}}{|z|^2-\varepsilon^2}-b\frac{F^{i\bar{j}}\bar{z}_iz_j}{(|z|^2-\varepsilon^2)^2}\\
            &+F^{i\bar{j}}\left(-a\frac{u_i}{u}+b\frac{\bar{z}_i}{|z|^2-\varepsilon^2}+\frac{\partial_i|\nabla u|^2}{|\nabla u|^2}\right)\left(-a\frac{u_{\bar{j}}}{u}+b\frac{z_j}{|z|^2-\varepsilon^2}+\frac{\partial_{\bar{j}}|\nabla u|^2}{|\nabla u|^2}\right)\\
            =&\frac{F^{i\bar{j}}(u_{il}u_{\bar{l}\bar{j}}+u_{i\bar{l}}u_{l\bar{j}})}{|\nabla u|^2}-a\frac{F^{i\bar{j}}u_{i\bar{j}}}{u}+a\frac{F^{i\bar{j}}u_iu_{\bar{j}}}{u^2}+b\frac{\mathcal{F}}{|z|^2-\varepsilon^2}-b\frac{F^{i\bar{j}}\bar{z}_iz_j}{(|z|^2-\varepsilon^2)^2}\\
            &+F^{i\bar{j}}\left(-a\frac{u_i}{u}+b\frac{\bar{z}_i}{|z|^2-\varepsilon^2}\right)\left(-a\frac{u_{\bar{j}}}{u}+b\frac{z_j}{|z|^2-\varepsilon^2}\right)\\
            &+2{\rm Re}\left(F^{i\bar{j}}\left(\frac{u_{il}u_{\bar{l}}+u_{i\bar{l}}u_l}{|\nabla u|^2}\right)\left(-a\frac{u_{\bar{j}}}{u}+b\frac{z_j}{|z|^2-\varepsilon^2}\right)\right).
        \end{aligned}
        \end{equation}
        Using Cauchy-Schwarz inequality, we have
        \begin{equation}
        \begin{aligned}
             &\left|2{\rm Re}\left(F^{i\bar{j}}u_{il}u_{\bar{l}}\left(-a\frac{u_{\bar{j}}}{u}+b\frac{z_j}{|z|^2-\varepsilon^2}\right)\right)\right|\\
             \leq&|\nabla u|^2F^{i\bar{j}}\left(-a\frac{u_i}{u}+b\frac{\bar{z}_i}{|z|^2-\varepsilon^2}\right)\left(-a\frac{u_{\bar{j}}}{u}+b\frac{z_j}{|z|^2-\varepsilon^2}\right)+F^{i\bar{j}}u_{il}u_{\bar{l}\bar{j}}
        \end{aligned}
        \end{equation}
        and 
        \begin{equation}
        \begin{aligned}
             &\left|2{\rm Re}\left(F^{i\bar{j}}u_{i\bar{l}}u_l\left(-a\frac{u_{\bar{j}}}{u}+b\frac{z_j}{|z|^2-\varepsilon^2}\right)\right)\right|\\
             \leq&2|\nabla u|^2F^{i\bar{j}}\left(-a\frac{u_i}{u}+b\frac{\bar{z}_i}{|z|^2-\varepsilon^2}\right)\left(-a\frac{u_{\bar{j}}}{u}+b\frac{z_j}{|z|^2-\varepsilon^2}\right)+\frac{1}{2}F^{i\bar{j}}u_{i\bar{l}}u_{l\bar{j}}.
        \end{aligned}
        \end{equation}
        Substituting into \eqref{cal111} and using Cauchy-Schwarz inequality again, we obtain
        \begin{equation}
        \begin{aligned}
            P^{-1}F^{i\bar{j}}P_{i\bar{j}}\geq&-a\frac{F^{i\bar{j}}u_{i\bar{j}}}{u}+a\frac{F^{i\bar{j}}u_iu_{\bar{j}}}{u^2}+b\frac{\mathcal{F}}{|z|^2-\varepsilon^2}-b\frac{F^{i\bar{j}}\bar{z}_iz_j}{(|z|^2-\varepsilon^2)^2}\\
            &-2F^{i\bar{j}}\left(-a\frac{u_i}{u}+b\frac{\bar{z}_i}{|z|^2-\varepsilon^2}\right)\left(-a\frac{u_{\bar{j}}}{u}+b\frac{z_j}{|z|^2-\varepsilon^2}\right)\\
            \geq&\frac{{F^{i\bar{j}}u_{i\bar{l}}u_{l\bar{j}}}}{2|\nabla u|^2}-(4a^2-a)\frac{F^{i\bar{j}}u_iu_{\bar{j}}}{u^2}-(b+4b^2)\frac{F^{i\bar{j}}\bar{z}_iz_j}{(|z|^2-\varepsilon^2)^2},
        \end{aligned}
        \end{equation}
        where we use the fact that $F^{i\bar{j}}u_{i\bar{j}}>0,u<0,|z|>\varepsilon$ in $\Omega_\varepsilon$.
    \end{proof}
    Now back to the proof of \eqref{C2estimate}. We know from \eqref{C1estimate} that when $k<n,a=2-\frac{2}{\gamma},b=2$, $P$ is uniformly bounded in $\overline{\Omega}_\varepsilon$. Denote $M=2\max_{\overline{\Omega}_\varepsilon}P>0$, which is independent of $\varepsilon$. Consider the function
    \begin{equation}
        Q(z,\xi)=e^{\varphi(P)+\phi(u)+\psi(|z|^2-\varepsilon^2)}u_{\xi\bar{\xi}},\quad z\in\overline{\Omega}_\varepsilon,\xi\in\mathbb{C}^n,
    \end{equation}where $\varphi(P)=-\tau\log(1-\frac{P}{M})$ satisfies $\varphi''=\frac{1}{\tau}|\varphi'|^2$. Here $\tau>0$ is a small fixed constant to be determined. It follows that $0\leq\varphi(P)\leq\tau\log2$ and that $\varphi'(P)=\tau(M-P)^{-1},\frac{\tau}{M}\leq\varphi'(P)\leq\frac{2\tau}{M}$. 
    
    Suppose $G$ attains a maximum at some $z_0\in\Omega_\varepsilon$ in the direction $\xi=(1,0,\cdots,0)$. WLOG we assume $u_{i\bar{j}}(z_0)$ is diagonal so that $\lambda_i=u_{i\bar{i}}$, and that $\lambda_1\geq\cdots\geq\lambda_n,F^{n\bar{n}}\geq\cdots\geq F^{1\bar{1}}>0$. The derivative tests then imply at $z_0$,
    \begin{equation}\label{gradienttest}
        0=\partial_i\log Q=\varphi'P_i+\phi'u_i+\psi'\bar{z}_i+\frac{u_{1\bar{1}i}}{u_{1\bar{1}}},\quad 0=\partial_{\bar{i}}\log Q=\varphi'P_{\bar{i}}+\phi'u_{\bar{i}}+\psi'z_i+\frac{u_{1\bar{1}\bar{i}}}{u_{1\bar{1}}},
    \end{equation}
and 
\begin{equation}\label{2ndderivativetest}
\begin{aligned}
     0&\geq F^{i\bar{i}}\partial_{i\bar{i}}\log Q\\    &=\varphi'F^{i\bar{i}}P_{i\bar{i}}+\varphi''F^{i\bar{i}}|P_i|^2+\phi'F^{i\bar{i}}u_{i\bar{i}}+\phi''F^{i\bar{i}}|u_i|^2+\psi'\mathcal{F}+\psi''F^{i\bar{i}}|z_i|^2+F^{i\bar{i}}\left(\frac{u_{1\bar{1}i\bar{i}}}{u_{1\bar{1}}}-\frac{|u_{1\bar{1}i}|^2}{u_{1\bar{1}}^2}\right).
\end{aligned}
\end{equation}

By \eqref{gradienttest} and Cauchy-Schwarz inequality,
\begin{equation}\label{cal112}
    \frac{|u_{1\bar{1}i}|^2}{u_{1\bar{1}}^2}=|\varphi'P_i+\phi'u_i+\psi'\bar{z}_i|^2\leq3(|\varphi'|^2|P_i|^2+|\phi'|^2|u_i|^2+|\psi'|^2|z_i|^2).
\end{equation}
Since $F$ is concave, we have, using Lemma \ref{lemma001} and \eqref{cal112} in \eqref{2ndderivativetest},
\begin{equation}
\begin{aligned}
    0\geq&\frac{\tau P}{M}\frac{1}{2|\nabla u|^2}F^{i\bar{i}}u_{i\bar{i}}^2-\frac{2\tau P}{M}\frac{a(4a-1)}{u^2}F^{i\bar{i}}|u_i|^2-\frac{2\tau P}{M}\frac{18}{(|z|^2-\varepsilon^2)^2}F^{i\bar{i}}|z_i|^2\\
    &+(\varphi''-3|\varphi'|^2)F^{i\bar{i}}|P_i|^2+(\phi''-3|\phi'|^2)F^{i\bar{i}}|u_i|^2+(\psi''-3|\psi'|^2)F^{i\bar{i}}|z_i|^2+\phi'\varepsilon^{\frac{1}{k}}+\psi'\mathcal{F},
\end{aligned}
\end{equation}
where $a=2-\frac{2}{\gamma}$. Let $\tau=\frac{1}{3}$ and $\phi(u)=-(a-1)\log(-u),\psi(|z|^2-\varepsilon^2)=2\log(|z|^2-\varepsilon^2)$. It follows that 
\begin{equation}
    0\geq\frac{1}{6M}(-u)^{-a}(|z|^2-\varepsilon^2)^2F^{i\bar{i}}u_{i\bar{i}}^2-\frac{13a^2-22a+12}{3}\frac{F^{i\bar{i}}|u_i|^2}{u^2}-16\frac{F^{i\bar{i}}|z_i|^2}{(|z|^2-\varepsilon^2)^2}.
\end{equation}
By \eqref{uniformlyelliptic}, we have $F^{1\bar{1}}\geq c_0\mathcal{F}$, which implies that
\begin{equation}
    0\geq\frac{c_0}{M}(-u)^{-a}(|z|^2-\varepsilon^2)^2u_{1\bar{1}}^2-C_1\frac{|\nabla u|^2}{u^2}-C_2\frac{|z|^2}{(|z|^2-\varepsilon^2)^2}.
\end{equation}
Multiplying both sides by $M(1-\frac{P}{M})^{-2\tau}(-u)^{2-a}(|z|^2-\varepsilon^2)^2$ yields
\begin{equation}
    0\geq c_0Q^2-C_1MP-C_2
\end{equation}
where we use \eqref{C0estimate}. This proves $Q\leq C_{n,k}M$ is bounded in $\Omega_\varepsilon$ independent of $\varepsilon$. On the other hand, $Q(z)=0$ for $|z|=\varepsilon$ and by \eqref{C2boundary1}, $Q$ is also bounded on $\partial\Omega$. Combining \eqref{C0estimate} we deduce \eqref{C2estimate}. 

\subsection{Global second order estimates, $k=n$}

We integrate the idea in \cite{szekelyhidi2018fully,gao2023exterior}. Still writing $u$ instead of $u^\varepsilon$, we now define $P$ as in \eqref{P'}, namely
\begin{equation}
    P=(-u)^{-\beta}|\nabla u|^2,\quad \beta=2+\frac{1}{n-2},\quad M=2\max_{\overline{\Omega}_\varepsilon}P.
\end{equation}
By \eqref{C1estimate'}, we know $M$ is finite and independent of $\varepsilon$. Consider the function
    \begin{equation}
        Q(z,\xi)=e^{\varphi(P)+\phi(u)}u_{\xi\bar{\xi}},\quad z\in\overline{\Omega}_\varepsilon,\xi\in\mathbb{C}^n,
    \end{equation}where $\varphi(P)=-\tau\log(1-\frac{P}{M}),\tau>0$ is a small fixed constant to be determined, and $\phi(u)=-\alpha\log(-u)$ for $\alpha=\beta-1>1$. 

     Suppose $Q$ attains an interior maximum at some $z_0\in\Omega_\varepsilon$ in the direction $\xi=(1,0,\cdots,0)$. WLOG we assume $u_{i\bar{j}}(z_0)$ is diagonal so that $\lambda_i=u_{i\bar{i}}$, and that $\lambda_1\geq\cdots\geq\lambda_n,F^{n\bar{n}}\geq\cdots\geq F^{1\bar{1}}>0$. The derivative tests then imply at $z_0$,
        \begin{equation}\label{gradienttest'}
        0=\partial_i\log Q=\varphi'P_i+\phi'u_i+\frac{u_{1\bar{1}i}}{u_{1\bar{1}}},\quad 0=\partial_{\bar{i}}\log Q=\varphi'P_{\bar{i}}+\phi'u_{\bar{i}}+\frac{u_{1\bar{1}\bar{i}}}{u_{1\bar{1}}},
    \end{equation}
and 
\begin{equation}\label{2ndderivativetest'}
    0\geq F^{i\bar{i}}\partial_{i\bar{i}}\log Q  =\varphi'F^{i\bar{i}}P_{i\bar{i}}+\varphi''F^{i\bar{i}}|P_i|^2+\phi'F^{i\bar{i}}u_{i\bar{i}}+\phi''F^{i\bar{i}}|u_i|^2+F^{i\bar{i}}\left(\frac{u_{1\bar{1}i\bar{i}}}{u_{1\bar{1}}}-\frac{|u_{1\bar{1}i}|^2}{u_{1\bar{1}}^2}\right).
\end{equation}

Fix some small constant $0<\delta<\frac{1}{4\alpha}<\frac{1}{4}$. Consider the following two cases.
    \begin{enumerate}
    
        \item $\lambda_n\leq-\delta\lambda_1$.
        
In this scenario, the proof is similar to the $k<n$ case. Using \eqref{gradienttest'} to cancel out the third derivative terms, we get, after using Lemma \ref{lemma001} for $a=\beta,b=0$ and noting that $F$ is concave,

\begin{equation}
\begin{aligned}
    0&\geq\frac{\tau P}{M}\frac{1}{2|\nabla u|^2}F^{i\bar{i}}u_{i\bar{i}}^2+(\varphi''-2|\varphi'|^2)F^{i\bar{i}}|P_i|^2+\left(\phi''-2|\phi'|^2-\frac{2\tau P}{M}\frac{\beta(4\beta-1)}{u^2}\right)F^{i\bar{i}}|u_i|^2\\
    &\geq\frac{\tau}{2M}(-u)^{-\beta}F^{i\bar{i}}\lambda_i^2-(\alpha(2\alpha-1)+\tau\beta(4\beta-1))\frac{F^{i\bar{i}}|u_i|^2}{u^2}\\
    &\geq\frac{\tau\delta^2}{2M}(-u)^{-\beta}F^{n\bar{n}}\lambda_1^2-C_{n,\tau}\mathcal{F}\frac{|\nabla u|^2}{u^2},
\end{aligned}
\end{equation}
where we take $\tau<\frac{1}{2}$ sufficiently small. Clearly we have $F^{n\bar{n}}\geq \frac{1}{n}\mathcal{F}$, so multiplying both sides by $M(1-\frac{P}{M})^{-2\tau}(-u)^{2-\beta}$ yields
\begin{equation}
    0\geq\frac{\tau\delta^2}{2n}Q^2-C_{n,\tau}MP,
\end{equation}
namely $Q\leq C_{n,\tau,\delta}M$.

        \item $\lambda_n>-\delta\lambda_1$.

Note that
\begin{equation}
    \phi'(u)=-\frac{\alpha}{u},\quad\phi''(u)=\frac{\alpha}{u^2}=\frac{1}{\alpha}|\phi'(u)|^2.
\end{equation}
Using Lemma \ref{lemma001} again for $a=\beta,b=0$, we have
\begin{equation}
    \varphi'F^{i\bar{i}}P_{i\bar{i}}\geq\frac{\tau}{2M}(-u)^{-\beta}F^{i\bar{i}}\lambda_i^2-\tau\beta(4\beta-1)F^{i\bar{i}}\frac{|u_i|^2}{u^2}.
\end{equation}
Now take $0<\tau\leq\frac{\alpha-4\delta\alpha^2}{\beta(4\beta-1)}<\frac{1}{2}$ and rewrite \eqref{2ndderivativetest'} as
\begin{equation}
    0\geq\frac{\tau}{2M}(-u)^{-\beta}F^{i\bar{i}}\lambda_i^2+\varphi''F^{i\bar{i}}|P_i|^2+4\delta\alpha^2F^{i\bar{i}}\frac{|u_i|^2}{u^2}+F^{i\bar{i}}\frac{u_{1\bar{1}i\bar{i}}}{\lambda_1}-F^{i\bar{i}}\frac{|u_{1\bar{1}i}|^2}{\lambda_1^2}.
\end{equation}

Let $I=\{1\leq i\leq n:F^{1\bar{1}}<\delta F^{i\bar{i}}\}$. For $i\in I$, we substitute $F^{i\bar{i}}|u_i|^2$ using \eqref{gradienttest'} and the absorbing inequality:
\begin{equation}\label{cal121}
    \alpha^2\frac{|u_i|^2}{u^2}=|\phi'|^2|u_i|^2=\left|\frac{u_{1\bar{1}i}}{\lambda_1}+\varphi'P_i\right|^2\geq\frac{1}{2}\frac{|u_{1\bar{1}i}|^2}{\lambda_1^2}-|\varphi'|^2|P_i|^2.
\end{equation}
For $i\notin I$, we get rid of the third order terms as before:
\begin{equation}\label{cal122}
    \frac{|u_{1\bar{1}i}|^2}{\lambda_1^2}=|\varphi'P_i+\phi'u_i|^2\leq2|\varphi'|^2|P_i|^2+2\alpha^2\frac{|u_i|^2}{u^2}.
\end{equation}
Combining \eqref{cal121}, \eqref{cal122} with \eqref{2ndderivativetest'}, we have
\begin{equation}\label{cal123}
\begin{aligned}
        0\geq&\frac{\tau}{2M}(-u)^{-\beta}F^{i\bar{i}}\lambda_i^2+(\varphi''-2|\varphi'|^2)\sum_{i\notin I}F^{i\bar{i}}|P_i|^2+(\varphi''-4\delta|\varphi'|^2)\sum_{i\in I}F^{i\bar{i}}|P_i|^2\\
        &-(2-4\delta)\alpha^2\sum_{i\notin I}F^{i\bar{i}}\frac{|u_i|^2}{u^2}+F^{i\bar{i}}\frac{u_{1\bar{1}i\bar{i}}}{\lambda_1}-(1-2\delta)\sum_{i\in I}F^{i\bar{i}}\frac{|u_{1\bar{1}i}|^2}{\lambda_1^2}.
\end{aligned}
\end{equation}
Since $\frac{1}{\tau}>2>4\delta$ and $\varphi''=\frac{1}{\tau}|\varphi'|^2$, the second and the third term in \eqref{cal123} are positive. If we can show the sum of the last two terms is also positive, then since $F^{i\bar{i}}\leq\delta^{-1}F^{1\bar{1}}$ for all $i\notin I$, we have
\begin{equation}
\begin{aligned}
        0&\geq\frac{\tau}{2M}(-u)^{-\beta}F^{i\bar{i}}\lambda_i^2-(2-4\delta)\alpha^2\sum_{i\notin I}F^{i\bar{i}}\frac{|u_i|^2}{u^2}\\
        &\geq\frac{\tau}{2M}(-u)^{-\beta}F^{1\bar{1}}\lambda_1^2-(\frac{2}{\delta}-4)\alpha^2F^{1\bar{1}}\frac{|\nabla u|^2}{u^2}.
\end{aligned}
\end{equation}
Multiplying both sides by $M(1-\frac{P}{M})^{-2\tau}(-u)^{2-\beta}$, we get the same
\begin{equation}
    0\geq\frac{\tau}{2}Q^2-C_{n,\tau,\delta}MP.
\end{equation}

In fact, by \eqref{linearizedderivative2} and \eqref{2ndderivativeofF}, taking derivatives in the directions $\partial_1,\partial_{\bar{1}}$ yields
\begin{equation}
\begin{aligned}
        F^{i\bar{i}}{u_{1\bar{1}i\bar{i}}}=&-F^{i\bar{j},l\bar{m}}u_{i\bar{j}1}u_{l\bar{m}\bar{1}}
        =-\sum_{i,j}f_{ij}u_{i\bar{i}1}u_{j\bar{j}1}+\sum_{p\neq q}\frac{F^{p\bar{p}}-F^{q\bar{q}}}{\lambda_q-\lambda_p}|u_{p\bar{q}1}|^2\\
        &\geq\sum_{i\in I}\frac{F^{i\bar{i}}-F^{1\bar{1}}}{\lambda_1-\lambda_i}|u_{1\bar{1}i}|^2>\sum_{i\in I}\frac{(1-\delta)F^{i\bar{i}}}{\lambda_1-\lambda_i}|u_{1\bar{1}i}|^2,
\end{aligned}
\end{equation}
where we use the concavity of $f$. Thus
\begin{equation}
    F^{i\bar{i}}\frac{u_{1\bar{1}i\bar{i}}}{\lambda_1}-(1-2\delta)\sum_{i\in I}F^{i\bar{i}}\frac{|u_{1\bar{1}i}|^2}{\lambda_1^2}>\frac{1}{\lambda_1}\sum_{i\in I}\left(\frac{1-\delta}{\lambda_1-\lambda_i}-\frac{(1-2\delta)}{\lambda_1}\right)F^{i\bar{i}}|u_{1\bar{1}i}|^2.
\end{equation}
To show this is positive, we only need 
\begin{equation}
    (1-\delta)\lambda_1-(1-2\delta)(\lambda_1-\lambda_i)=\delta\lambda_1+(1-2\delta)\lambda_i\geq0.
\end{equation}
Indeed, if $\lambda_i\geq0$, then there is nothing to prove. If $\lambda_i<0$, since $\lambda_i>\lambda_n>-\delta\lambda_1$, we know $\delta\lambda_1+(1-2\delta)\lambda_i>-2\delta\lambda_i>0$. 

\end{enumerate}

This concludes the global complex Hessian estimate and reduces the second order estimates to the boundary case. Considering \eqref{C2boundary1}, it suffices to show $|\nabla^2u|\leq C\varepsilon^{-\gamma-2}$ on $\partial B_\varepsilon$, or equivalently, $|\nabla^2\tilde{u}(z)|\leq C$ for $z\in\partial B_1$ after scaling $u^\varepsilon$ and $\underline{u}$ to $\tilde{u}$ and $\tilde{\underline{u}}$. By \eqref{C1estimate'} and \eqref{estimatesforsubsolution}, we have
\begin{equation}
    \|\tilde{u}\|_{C^1(B_2\setminus B_1)}\leq C,\quad \|\tilde{\underline{u}}\|_{C^4(\overline{D}\setminus B_1)}\leq C
\end{equation}for some constant $C$ uniform in $\varepsilon$. WLOG $z_0=(0,\cdots,0,1)\in\partial B_1$. Write $z=(t',x_n)\in\mathbb{R}^{2n}$, where $t'=(t_1,\cdots,t_{2n-1}), t_{2i-1}=y_i,t_{2i}=x_i,1\leq i\leq n-1,t_{2n-1}=y_n$. Locally near $z_0$, $\partial B_1$ is represented by
\begin{equation}
    x_n=\rho(t')=(1-|t'|^2)^{\frac{1}{2}}=1-\delta_{\alpha\beta}t_\alpha t_\beta+O(|t'|^3),
\end{equation}where $\alpha,\beta$ sum over $1,\cdots,2n-1$. The proof now proceeds similarly as the $\partial\Omega$ case, only with a `concave' and also more specific boundary. 

The tangential derivatives are estimated by taking derivatives of $(\tilde{u}-\tilde{\underline{u}})(t',\rho(t'))=0$ in $t_\alpha,t_\beta$ on $\partial B_1$, which implies $|\tilde{u}_{t_\alpha t_\beta}(z_0)|\leq C$ as in \eqref{cal141}. 

As for the tangential-normal derivatives, using similar barrier functions, we still have $Lw\leq 0$ in $(D\setminus B_1)\cap B_\delta(z_0)$, where 
\begin{gather}
    w=\pm T_\alpha(\tilde{u}-\tilde{\underline{u}})+(\tilde{u}_{y_n}-\tilde{\underline{u}}_{y_n})^2+\sum_{l<n}|\tilde{u}_l-\tilde{\underline{u}}_l|^2-A|x_n-1|-B\psi,\\
    \psi=\tilde{u}-\tilde{\underline{u}}+td-\frac{N}{2}d^2\geq0,\quad d(z)={\rm dist}(z,\partial B_1)
\end{gather}
are defined in $(D\setminus B_1)\cap B_\delta(z_0)$ for $t,\delta\ll1,N\gg1$ and $B\gg A\gg1$ sufficiently large. To use the maximum principle, we only need to justify $w\leq0$ on $\partial((D\setminus B_1)\cap B_\delta(z_0))$, which consists of two parts. On $\partial B_1\cap B_\delta(z_0)$, we have $T_\alpha(\tilde{u}-\tilde{\underline{u}})=\psi=0$ and $|x_n-1|=\frac{|t'|^2}{1+x_n}\geq\frac{1}{2}|t'|^2$. It follows that 
\begin{equation}
\begin{aligned}
        (\tilde{u}_{y_n}-\tilde{\underline{u}}_{y_n})^2+\sum_{l<n}|\tilde{u}_l-\tilde{\underline{u}}_l|^2&=\rho_{y_n}^2(\tilde{u}_{x_n}-\tilde{\underline{u}}_{x_n})^2+\frac{1}{4}\sum_{l<n}(\rho_{x_l}^2+\rho_{y_l}^2)(\tilde{u}_{x_n}-\tilde{\underline{u}}_{x_n})^2\\
        &\leq C\frac{y_n^2}{\rho^2}+C\sum_{l<n}\frac{x_l^2+y_l^2}{\rho^2}\leq C\frac{|t'|^2}{1-|t'|^2},
\end{aligned}
\end{equation}
so that 
\begin{equation}
    w\leq C\frac{|t'|^2}{1-|t'|^2}-\frac{A}{2}|t'|^2\leq \left(\frac{C}{1-\delta^2}-\frac{A}{2}\right)|t'|^2\leq0
\end{equation}
for $A>0$ sufficiently large. On $\partial B_\delta(z_0)\cap B_1^c$, note that $|x_n-1|+\psi$ is always a positive function on a compact set, and hence assumes a positive lower bound. Taking $A,B$ large further and observing that the first three terms of $w$ are bounded, we assert that $w\leq0$ on $\partial(B_1^c\cap B_\delta(z_0))$.
The maximum principle then implies $w\leq 0$ in $(D\setminus B_1)\cap B_\delta(z_0)$. It follows that $|T_\alpha(\tilde{u}-\tilde{\underline{u}})|\leq A|x_n-1|+B\psi$. In particular, at $t'=0,x_n\rightarrow1$, we have
\begin{equation}
    |\tilde{u}_{t_\alpha x_n}(z_0)|=|(T_\alpha\tilde{u})_{x_n}(z_0)|\leq|(T_\alpha\tilde{\underline{u}})_{x_n}(z_0)|+A+B|\psi_{x_n}(z_0)|\leq C.
\end{equation}

Eventually we need an upper bound for the normal derivatives $u_{n\bar{n}}(z_0)$.
Taking tangential derivatives, since $\tilde{u}_\nu\leq\tilde{\underline{u}}_\nu$ on $\partial B_1$, we have, as a matrix,
\begin{equation}
\begin{aligned}
\tilde{u}_{t_\alpha t_\beta}(z_0)&=\tilde{\underline{u}}_{t_\alpha t_\beta}(z_0)-\delta_{\alpha\beta}(\tilde{u}-\tilde{\underline{u}})_\nu(z_0)\geq\tilde{\underline{u}}_{t_\alpha t_\beta}(z_0)\\
&\geq-\varepsilon^{\gamma+2}\|v\|_{C^2}-\frac{\partial^2|z|^{-\gamma}}{\partial t_\alpha\partial t_\beta}(z_0)
 \geq-C\varepsilon^{\gamma+2}+\gamma\delta_{\alpha\beta}\geq c_0'\delta_{\alpha\beta},
\end{aligned}
\end{equation}
which implies that the matrix
\begin{equation}
    \tilde{u}_{l\bar{m}}(z_0)=\frac{1}{4}(\tilde{u}_{x_l x_m}+\tilde{u}_{y_l y_m}+i(\tilde{u}_{x_l y_m}-\tilde{u}_{x_m y_l}))\geq c_0\delta_{lm}
\end{equation}
is strictly positive definite for for $1\leq l,m\leq n-1$. As in the $\partial\Omega$ case, this guarantees that the sum $\mu_n$ of the $n-1$ tangential eigenvalues of $i\partial\bar{\partial}u$ has a uniform positive lower bound, while the rest of the $\mu_i$'s are approximately as large as $u_{n\bar{n}}(z_0)$. Using equation \eqref{approxdirichletproblem} we know the product $\mu_1\cdots\mu_n=\varepsilon$ is fixed, which implies that $u_{n\bar{n}}(z_0)$ must be bounded from above. 

\section*{Acknowledgements}
The author would like to thank his advisor Prof. Congming Li as well as Prof. Bo Guan for many enlightening discussions and encouragements.
\bibliographystyle{siam}
\bibliography{ref}
\end{document}